\newtheorem{theorem}{Theorem}
\newtheorem{lemma}[theorem]{Lemma}
\newtheorem{remark}[theorem]{Remark}
\newtheorem{example}[theorem]{Example}
\begin{document}


\title{symmetric affine surfaces with torsion}

\author{D.~D'Ascanio, P.~B.~Gilkey and P.~Pisani}
\address{D'Ascanio: Instituto de F\'isica La Plata, CONICET and Universidad Nacional de La Plata, CC 67 (1900) La Plata, Argentina. }
\email{dascanio@fisica.unlp.edu.ar}
\address{Gilkey: Mathematics Department, University of Oregon, Eugene OR 97403 USA. }
\email{gilkey@uoregon.edu}
\address{Pisani: Instituto de F\'isica La Plata, CONICET and Universidad Nacional de La Plata, CC 67 (1900) La Plata, Argentina. }
\email{pisani@fisica.unlp.edu.ar}
\subjclass[2010]{53C21}
\keywords{symmetric affine space, torsion}

\begin{abstract}
We study symmetric affine surfaces which have non-vanishing torsion tensor.
We give a complete classification of the local geometries possible if the torsion is assumed
parallel. This generalizes a previous result of Opozda in the torsion free setting; these geometries
are all locally homogeneous. If the torsion is not parallel, we assume the underlying surface is locally homogeneous and provide
a complete classification in this setting as well.
\end{abstract}

\maketitle


\thispagestyle{empty}

\section{Introduction and statement of results}

In differential geometry, a connection $\nabla$ on the tangent bundle of a smooth manifold $M$ gives rise to the notion of parallelism.
The pair $\mathcal{M}=(M,\nabla)$ is called an {\it affine manifold}; if $\dim\{M\}=2$, $\mathcal{M}$ is called
an {\it affine surface}. We emphasize that in contrast to the usage
employed by some authors, we permit the torsion tensor $T(X,Y):=\frac12(\nabla_XY-\nabla_YX-[X,Y])$ to be non-zero.

The study of various properties of affine manifolds is relevant in {\it non-metric extensions} of General Relativity, i.e.~geometries where the connection $\nabla$
does not arise as the Levi-Civita connection of some underlying pseudo-Riemannian metric.
The standard formulation of General Relativity regards the metric as a
canonical field which determines the affine structure by means of the Levi-Civita connection. Open questions
in our current understanding of gravitation have led physicists to study generalizations of this scenario.
In non-metric extensions of General Relativity \cite{Hehl:1994ue,Zanelli:2005sa}, the affine connection provides
an independent degree of freedom; in particular, Einstein-Cartan theory regards the torsion tensor as a new canonical field.

Spacetimes with torsion give different dynamics for matter fields \cite{Shapiro:2001rz} (see \cite{Hammond:2002rm}
for an account of experiments aimed at measuring the existence of torsion). As a dynamical field, torsion also plays an
important role in alternative models of the early universe \cite{Poplawski16,Trautman:1973wy}. For recent articles on
Einstein-Cartan gravity, we refer to \cite{BeltranJimenez:2019tjy,Klemm:2018bil}. Note also that two-dimensional theories
of gravity constitute an area of interest on its own; for studies of torsion in this context see
\cite{Grumiller:2002nm,Katanaev:1986qm,Obukhov:1997uc} and the references therein.
Finally, non-metric connections  can be used to study defects in condensed matter; in this setting, the
torsion describes dislocations in solids
\cite{Hehl:2007bn,Katanaev:1992kh,Kroener90}.
Thus, apart from their purely mathematical relevance, the affine properties of manifolds are of interest in physical contexts.

\subsection{Notational conventions} A diffeomorphism of the underlying manifold $M$ is said to be
an {\it affine diffeomorphism} if it preserves the connection; the geometry
$\mathcal{M}$ is said to be {\it affine homogeneous} if the group of affine diffeomorphisms acts transitively. There is a corresponding
local theory.
Let $R(X,Y):=\nabla_X\nabla_Y-\nabla_Y\nabla_X-\nabla_{[X,Y]}$ be the {\it curvature operator} of an affine geometry
$\mathcal{M}=(M,\nabla)$. We contract indices to define the {\it Ricci tensor}
$\rho(X,Y):=\operatorname{Trace}(Z\rightarrow R(Z,X)Y)$.
We say that $\mathcal{M}$ has a {\it symmetric Ricci tensor}
if $\rho(X,Y)=\rho(Y,X)$ for all $X$ and $Y$; this is always the case in the metrizable setting but need not hold in general.
We say that $\mathcal{M}$ is a {\it symmetric affine surface} if $\mathcal{M}$ is an affine surface
satisfying $\nabla R=0$ or,  equivalently as we are in the 2-dimensional setting, if the Ricci tensor is parallel, i.e. $\nabla\rho=0$.
We will show presently in Lemma~\ref{L7} that any symmetric affine surface has a symmetric Ricci tensor.

If $(x^1,x^2)$ is a system of local coordinates on an
affine surface, we expand $\nabla_{\partial_{x^i}}\partial_{x^j}=\Gamma_{ij}{}^k\partial_{x^k}$; the {\it Christoffel symbols}
$\Gamma_{ij}{}^k$ determine the connection and we shall specify geometries by giving their (possibly) non-zero Christoffel symbols.
We say that an affine surface $\mathcal{M}_1=(M_1,\nabla_1)$ is {\it modeled} on an affine surface $\mathcal{M}=(M,\nabla)$ if $\mathcal{M}$ is
homogeneous and if there is a cover of $M_1$ by open sets which are affine isomorphic
to open subsets of $M$. This implies that $\mathcal{M}_1$ is locally homogeneous.

\subsection{Symmetric affine surfaces with vanishing torsion}
We say $\mathcal{M}$ is {\it torsion free}
if $T=0$. The torsion free symmetric affine surfaces have been classified by Opozda~\cite{Op91}.
The Ricci tensor is symmetric and there are 6 possible signatures.
If $\operatorname{Rank}\{\rho\}=0$, then $\rho=0$; if $\operatorname{Rank}\{\rho\}=1$, then $\rho$ is either positive semi-definite
($\rho\ge0$)
or negative semi-definite ($\rho\le0$);
if $\operatorname{Rank}\{\rho\}=2$, then $\rho$ is either positive definite ($\rho>0$), negative definite ($\rho<0$), or indefinite.
The symmetric affine surfaces without torsion are all locally homogeneous and
modeled on one of six non-isomorphic geometries which are distinguished by the signature of the Ricci tensor.
The first four of the geometries, given in Assertions~(1--4) below, are metrizable, i.e. the connection is the associated
Levi-Civita connection.
The remaining two geometries, given in Assertions~(5,6) below, are not metrizable.

\begin{theorem}[Opozda]\label{T1} Let $\mathcal{M}$ be a symmetric affine surface without torsion.
Then $\mathcal{M}$ is locally homogeneous and modeled on one of the following geometries:
\begin{enumerate}
\item The flat plane $\mathbb{R}^2$ with $ds^2=(dx^1)^2+(dx^2)^2$; $\rho=0$.
\item The hyperbolic plane $\mathbb{R}^+\times\mathbb{R}$ with $ds^2=\frac{(dx^1)^2+(dx^2)^2}{(x^1)^2}$; $\rho<0$.
\item The Lorentzian plane $\mathbb{R}^+\times\mathbb{R}$ with $ds^2=\frac{(dx^1)^2-(dx^2)^2}{(x^1)^2}$;
$\rho$ is indefinite.
\item The round sphere $S^2$; $\rho>0$.
\item The non-metrizable geometry  with $\Gamma_{11}{}^1=1$ and $\Gamma_{22}{}^1=+1$; $\rho\ge0$.
\item The non-metrizable geometry with $\Gamma_{11}{}^1=1$ and $\Gamma_{22}{}^1=-1$;
$\rho\le0$.\end{enumerate}\end{theorem}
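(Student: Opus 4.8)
The plan is to use the hypothesis $\nabla\rho=0$ together with Lemma~\ref{L7} (symmetry of the Ricci tensor) and to organize the classification by $\operatorname{Rank}\{\rho\}\in\{0,1,2\}$; since $\rho$ is parallel, parallel transport preserves it, so its rank is locally constant. I shall also use the elementary fact that in dimension two the curvature operator of a torsion free connection is determined by the Ricci tensor, and that when $\rho$ is symmetric this reads $R(X,Y)Z=\rho(Y,Z)X-\rho(X,Z)Y$. As every assertion is local, I work throughout on a small, simply connected coordinate chart.

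If $\operatorname{Rank}\{\rho\}=0$ then $\rho=0$, so $R=0$ by the displayed identity, $\nabla$ is flat, and $\mathcal{M}$ is locally the flat plane of Assertion~(1). If $\operatorname{Rank}\{\rho\}=2$ then $g:=\rho$ is a parallel, symmetric, non-degenerate $(0,2)$-tensor, i.e.\ a pseudo-Riemannian metric with $\nabla g=0$; as $\nabla$ is torsion free, uniqueness of the Levi-Civita connection forces $\nabla=\nabla^{g}$, so $\mathcal{M}$ is metrizable and $(M,g)$ is a locally symmetric surface, hence of constant Gauss curvature $K\ne0$. Splitting by the signature of $g$ and using $\rho=Kg$: positive definite gives $K>0$ and the round sphere (Assertion~(4)); negative definite gives $\rho<0$ and, via the positive definite representative of $g$, the hyperbolic plane (Assertion~(2)); Lorentzian gives $\rho$ indefinite and the constant curvature Lorentzian surface of Assertion~(3) (in dimension two the de Sitter and anti-de Sitter surfaces carry the same affine connection).

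The substantive case is $\operatorname{Rank}\{\rho\}=1$. A symmetric rank-one $(0,2)$-tensor is semi-definite, so $\rho=\varepsilon\,\theta\otimes\theta$ with $\theta$ nowhere zero and $\varepsilon\in\{+1,-1\}$, where $\varepsilon=+1$ corresponds to $\rho\ge0$ and $\varepsilon=-1$ to $\rho\le0$. Expanding $0=\nabla_{X}(\varepsilon\,\theta\otimes\theta)$ and evaluating on a vector with $\theta\ne0$ forces $\nabla\theta=0$; a parallel $1$-form for a torsion free connection is closed, so after a change of coordinates $\theta=dx^{1}$, whence $\nabla dx^{1}=0$ gives $\Gamma_{ij}{}^{1}=0$. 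Writing $\gamma_{ij}:=\Gamma_{ij}{}^{2}$, one computes that all components of $\rho$ vanish except $\rho_{11}=\partial_{2}\gamma_{11}-\partial_{1}\gamma_{12}+\gamma_{22}\gamma_{11}-\gamma_{12}^{2}$, which must equal the constant $\varepsilon$, with the additional relation $\partial_{2}\gamma_{12}=\partial_{1}\gamma_{22}$ supplied by Lemma~\ref{L7}. I then exhaust the residual coordinate freedom, all of which fixes $x^{1}$ and hence preserves $\theta=dx^{1}$: reparametrizing each pre-geodesic $\{x^{1}=\mathrm{const}\}$ by an affine parameter kills $\gamma_{22}$; an $x^{1}$-dependent rescaling $x^{2}\mapsto A(x^{1})x^{2}$ then kills $\gamma_{12}$; the equation becomes $\partial_{2}\gamma_{11}=\varepsilon$, so $\gamma_{11}=\varepsilon x^{2}+h(x^{1})$, and a shear $x^{2}\mapsto x^{2}+B(x^{1})$ with $B''+\varepsilon B=h$ eliminates $h$. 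This yields the normal form $\Gamma_{11}{}^{2}=\varepsilon x^{2}$ with all other Christoffel symbols zero; a short computation produces a four-dimensional algebra of affine Killing fields acting locally transitively, and exhibits this geometry as affine isomorphic to the model of Assertion~(5) when $\varepsilon=+1$ and of Assertion~(6) when $\varepsilon=-1$.

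I expect the rank-one case to be the main obstacle: one must check that each successive normalization is legitimate and compatible with the previous ones, that the sign $\varepsilon$ is the only surviving local invariant, and that the final normal form is genuinely affine isomorphic to Opozda's explicit models~(5)--(6); the rank-zero and rank-two cases are routine once one invokes the structure of flat, and of locally symmetric pseudo-Riemannian, surfaces. Local homogeneity of $\mathcal{M}$ is then immediate, since each of the six model geometries is homogeneous.
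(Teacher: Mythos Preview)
The paper does not actually prove Theorem~\ref{T1}; it is quoted from Opozda~\cite{Op91} without argument, so there is no ``paper's own proof'' to compare against. Your proposal therefore has to be judged on its own, and it is essentially correct and well organized. The rank--$0$ and rank--$2$ cases are handled cleanly (your observation that $g:=\rho$ is a parallel metric, forcing $\nabla=\nabla^{g}$ and constant curvature, is exactly the right move). The rank--$1$ reduction is also sound: the steps $\nabla\theta=0\Rightarrow\theta=dx^1\Rightarrow\Gamma_{ij}{}^1=0$, followed by successively killing $\gamma_{22}$, $\gamma_{12}$, and the $x^1$--dependent part of $\gamma_{11}$, all go through as you describe once one checks (as you should) that after setting $\gamma_{22}=0$ the relation $\partial_2\gamma_{12}=\partial_1\gamma_{22}=0$ makes $\gamma_{12}$ a function of $x^1$ alone, so an $x^1$--rescaling suffices.

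The one place where you wave your hands is the last identification: your normal form has only $\Gamma_{11}{}^2=\varepsilon x^2$, whereas Opozda's models~(5),(6) are the Type~$\mathcal{A}$ structures $\Gamma_{11}{}^1=1$, $\Gamma_{22}{}^1=\pm1$. After swapping the roles of $x^1,x^2$ your form reads $\Gamma_{22}{}^1=\varepsilon x^1$, and the substitution $\partial_{z^1}=x^1\partial_{x^1}$, $\partial_{z^2}=\partial_{x^2}$ (i.e.\ $z^1=\log x^1$) gives $\nabla_{\partial_{z^1}}\partial_{z^1}=\partial_{z^1}$ and $\nabla_{\partial_{z^2}}\partial_{z^2}=\varepsilon\partial_{z^1}$, which is precisely the model. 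This is the same exponential change of frame the paper uses in Section~\ref{S2} when proving Theorem~\ref{T2}(2), so your rank--$1$ argument is in fact the torsion--free specialization ($u=0$) of that proof. You should state this change of coordinates explicitly rather than calling it ``a short computation,'' since it is the step that actually matches your normal form to the listed models.
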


\subsection{Symmetric affine surfaces with non-vanishing and parallel torsion}
We will prove the following result in Section~\ref{S2} extending Theorem~\ref{T1}.
Were we to take $u=0$ in Theorem~\ref{T2}~(2), then the torsion would vanish and we would obtain the geometries described in
Theorem~\ref{T1}~(5,\ 6).

\begin{theorem}\label{T2}
Let $\mathcal{M}$ be a symmetric affine surface with non-vanishing and parallel torsion.  Then $\mathcal{M}$
is modeled on one of the following structures.\begin{enumerate}
\item $\Gamma_{11}{}^1=1$, $\Gamma_{12}{}^1=2$, $\rho=0$.
\item $\Gamma_{11}{}^1=1$, $\Gamma_{12}{}^1=2u$ for $u>0$,
$\Gamma_{22}{}^1=\pm1$, $\rho=\pm\, dx^2\otimes dx^2$.
\end{enumerate}
These geometries are all inequivalent affine structures and homogeneous.
\end{theorem}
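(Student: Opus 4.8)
\emph{Step 1: normal form via the torsion.} In dimension two every torsion tensor is pure trace: $T(X,Y)=\tfrac12\bigl(\phi(X)Y-\phi(Y)X\bigr)$ for a unique $1$-form $\phi$, a dimension count showing that all skew $(1,2)$-tensors arise this way. Since the identity endomorphism is parallel, $\nabla T=0$ is equivalent to $\nabla\phi=0$, so $\phi$ is a parallel $1$-form; as $T\neq0$ it vanishes nowhere. Feeding $\nabla\phi=0$, the structure identity $d\phi(X,Y)=(\nabla_X\phi)(Y)-(\nabla_Y\phi)(X)+2\phi(T(X,Y))$, and the explicit form of $T$ into one another gives $\phi(T(X,Y))=0$, hence $d\phi=0$. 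Thus locally $\phi=\phi_2\,dx^2$ for a coordinate $x^2$, which we complete to a chart $(x^1,x^2)$; here $\phi_2$ is a nonzero constant because $\phi$ and $dx^2$ are both parallel. Now $\nabla\,dx^2=0$ forces $\Gamma_{ij}{}^2=0$ for all $i,j$ (equivalently, $\operatorname{span}\{\partial_{x^1}\}=\ker\phi$ is a parallel line field), and $\Gamma_{ij}{}^k-\Gamma_{ji}{}^k=\phi_i\delta_j{}^k-\phi_j\delta_i{}^k$ collapses to $\Gamma_{12}{}^1-\Gamma_{21}{}^1=-\phi_2=:2u_0\neq0$. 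So the geometry is carried by the three functions $\Gamma_{11}{}^1,\Gamma_{21}{}^1,\Gamma_{22}{}^1$ of $(x^1,x^2)$ and the constant $u_0$, the residual coordinate freedom being $x^1\mapsto F(x^1,x^2)$ with $\partial_{x^1}F\neq0$, shears $x^1\mapsto x^1+\beta(x^2)$, and affine rescalings of $x^2$.

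\emph{Step 2: the symmetric-affine equations.} With $\Gamma_{ij}{}^2=0$ the curvature simplifies drastically: $R_{ijk}{}^2=0$, and from $R_{1jk}{}^1$ one reads $\rho_{11}=\rho_{12}=0$, $\rho_{21}=\partial_{x^1}\Gamma_{21}{}^1-\partial_{x^2}\Gamma_{11}{}^1$, and $\rho_{22}=\partial_{x^1}\Gamma_{22}{}^1-\partial_{x^2}\Gamma_{12}{}^1+\Gamma_{11}{}^1\Gamma_{22}{}^1-\Gamma_{21}{}^1\Gamma_{12}{}^1$. By Lemma~\ref{L7} the Ricci tensor is symmetric, so $\rho_{21}=0$ and $\rho=\rho_{22}\,dx^2\otimes dx^2$ is automatically degenerate; in particular $\operatorname{Rank}\{\rho\}\le1$, so the definite and indefinite signatures of Theorem~\ref{T1} cannot occur once the torsion is nonzero and parallel. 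Since $\nabla\,dx^2=0$ and $\Gamma_{ij}{}^2=0$, the condition $\nabla R=0$ (equivalently $\nabla\rho=0$, as we are in two dimensions) reduces to the single statement that $\rho_{22}$ equals a constant $c$. The system to solve is therefore $\partial_{x^1}\Gamma_{21}{}^1=\partial_{x^2}\Gamma_{11}{}^1$ together with $\rho_{22}\equiv c$.

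\emph{Step 3: integration and normalization.} Use the $F$-freedom to normalize $\Gamma_{11}{}^1=1$ (the value $1$, not $0$, is chosen so that the limit $u\to0$ below returns the geometries of Theorem~\ref{T1}(5,6)). Ricci symmetry then gives $\partial_{x^1}\Gamma_{21}{}^1=0$, so $\Gamma_{21}{}^1=\Gamma_{21}{}^1(x^2)$, which a shear $x^1\mapsto x^1+\beta(x^2)$ normalizes to $0$; hence $\Gamma_{12}{}^1=2u_0$. Now $\rho_{22}\equiv c$ becomes the linear equation $\partial_{x^1}\Gamma_{22}{}^1+\Gamma_{22}{}^1=c$ in $x^1$, with general solution $\Gamma_{22}{}^1=c+h(x^2)e^{-x^1}$; the remaining coordinate freedom is used to make $h\equiv0$, so $\Gamma_{22}{}^1\equiv c$. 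If $c=0$ the connection is flat, and rescaling $x^2$ to make $u_0=1$ yields the structure of~(1); if $c\neq0$, rescaling $x^2$ so that $\Gamma_{22}{}^1=\pm1$ (with the sign of $c$) leaves $\Gamma_{12}{}^1=2u$ with $u$ the surviving free constant, made positive by a reflection $x^2\mapsto-x^2$, giving the structure of~(2). I expect the main difficulty to be exactly this last normalization: checking that the residual coordinate group genuinely eliminates $h$ without disturbing $\Gamma_{11}{}^1$ and $\Gamma_{21}{}^1$ (alternatively, all values of $h$ give the same $R$ and $T$ at a point with $\nabla R=\nabla T=0$, so the geometries are locally determined by that common infinitesimal data and hence mutually isomorphic), and that no further identifications among the surviving parameters occur.

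\emph{Step 4: the two structures.} For each of~(1) and~(2) the underlying manifold is $\mathbb R^2$ with constant Christoffel symbols, so the translation group acts transitively by affine diffeomorphisms; hence both are homogeneous, and any $\mathcal M$ as in the hypothesis is modeled on one of them and is therefore locally homogeneous. Direct substitution confirms $\nabla R=0$, $\nabla T=0$, $T\neq0$, and that $\rho=0$ in case~(1) while $\rho=\pm\,dx^2\otimes dx^2$ in case~(2). For inequivalence: (1) has $\rho=0$ while (2) has $\rho\neq0$; within~(2), the scalar $\kappa$ defined by $\rho=\kappa\,\phi\otimes\phi$ (both tensors parallel, $\phi$ the canonical torsion $1$-form) is an affine invariant equal to $\pm\tfrac1{4u^2}$, so its sign recovers the $\pm$ and its modulus recovers $u$; thus the structures in~(2) are pairwise inequivalent and distinct from~(1). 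Finally each of these geometries has $T\neq0$, so none is equivalent to any of the six torsion-free models of Theorem~\ref{T1}.
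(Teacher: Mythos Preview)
Your proof is correct and takes a genuinely different route from the paper's. The paper splits into two cases. In the flat case it chooses a parallel frame $\{X,Y\}$, normalizes the Lie bracket to $[X,Y]=-Y$, and produces coordinates yielding a Type~$\mathcal{B}$ model which it then recognizes as $\mathcal{M}_{1,0}$. In the non-flat case it first shows (via Lemmas~\ref{L7} and~\ref{L8}) that $\operatorname{Rank}\rho=1$, observes that $\ker(\rho)$ is $\nabla$-invariant and contains a parallel vector field $\partial_{x^1}$, and then carries out a rather long sequence of ODE-driven coordinate changes. Your argument is unified: the observation that in dimension two the torsion is pure trace, $T(X,Y)=\tfrac12(\phi(X)Y-\phi(Y)X)$, yields a parallel closed $1$-form $\phi$ and hence $\nabla\,dx^2=0$, forcing $\Gamma_{ij}{}^2\equiv0$ in one stroke; this bypasses the case split and most of the paper's maneuvers. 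The one soft point is the elimination of $h$, which you flag; your alternative argument (that $\nabla R=\nabla T=0$ implies the local geometry is determined by $(R,T)$ at a point) is valid, and the direct normalization can also be done via $y^1=\ln(e^{x^1}+B(x^2))$ with $B''+2u_0B'+cB=h$, which preserves $\Gamma_{11}{}^1=1$ and $\Gamma_{21}{}^1=0$. For the inequivalence of the models, the paper compares $\rho_{{}^0\!\mathcal{M}_{u,v}}$ with $\rho_{\mathcal{M}_{u,v}}$ via the associated torsion-free connection, whereas your invariant $\kappa$ defined by $\rho=\kappa\,\phi\otimes\phi$ is more intrinsic and arguably cleaner. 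Both approaches work; yours is more conceptual, the paper's more computational.
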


\subsection{Locally homogeneous affine surfaces}

Opozda~\cite{Op04} classified the locally homogeneous affine surfaces without torsion.
Subsequently,  Arias-Marco and Kowalski~\cite{AMK08}
extended this classification to the more general setting;
a different proof of this result has been given recently by Brozos-V\'azquez et al.~\cite{BGG19}. Previous studies of locally homogeneous surfaces in the torsion free setting include \cite{OK-BO-ZV,OK-ZV}. For a different approach in higher dimensions we refer to \cite{ZD-OK}.

\begin{theorem}\label{T3}
Let $\mathcal{M}$ be a locally homogeneous affine surface, possibly with torsion.
At least one of the following possibilities holds:
\begin{enumerate}
\item Type~$\mathcal{A}$: There is a coordinate atlas for
$\mathcal{M}$ so $\Gamma_{ij}{}^k\in\mathbb{R}$.
\item Type~$\mathcal{B}$: There is a coordinate atlas
for $\mathcal{M}$ so $x^1\Gamma_{ij}{}^k\in\mathbb{R}$ and $x^1>0$.
\item Type~$\mathcal{C}$:
The geometry is locally isomorphic to the geometry of the round sphere $S^2$ with the associated
Levi-Civita connection.
\end{enumerate}\end{theorem}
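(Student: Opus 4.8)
The plan is to analyze, around a point $p\in M$, the finite dimensional Lie algebra $\mathcal{K}$ of germs of affine Killing vector fields. Since $\mathcal{M}$ is locally homogeneous, the evaluation map $\varepsilon\colon\mathcal{K}\to T_pM$, $X\mapsto X(p)$, is surjective; set $\mathcal{I}:=\ker(\varepsilon)$, so that $\dim\mathcal{K}=2+\dim\mathcal{I}$. Throughout we use that an affine Killing field is determined by its $1$-jet at $p$: hence $X\mapsto(\nabla X)(p)$ identifies $\mathcal{I}$ with a subalgebra of $\mathrm{gl}(T_pM)\cong\mathrm{gl}(2,\mathbb{R})$, and $\mathcal{I}$ fixes the torsion tensor $T$, the Ricci tensor $\rho$, and all of their iterated covariant derivatives, evaluated at $p$. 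We then read off the coordinate normal form of $\nabla$ from the structure of the pair $(\mathcal{K},\mathcal{I})$.

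First suppose $\dim\mathcal{I}=0$, so $\dim\mathcal{K}=2$. A two dimensional real Lie algebra is either abelian or the algebra $\mathrm{aff}(1,\mathbb{R})$ with a basis satisfying $[X_1,X_2]=X_1$. If $\mathcal{K}$ is abelian, a pair of commuting Killing fields spanning $T_pM$ near $p$ can be simultaneously straightened to $\partial_{x^1},\partial_{x^2}$, and the equations $\mathcal{L}_{\partial_{x^i}}\nabla=0$ force $\partial_{x^i}\Gamma_{jk}{}^l=0$, so all $\Gamma_{ij}{}^k$ are constant and $\mathcal{M}$ is of Type~$\mathcal{A}$. If $\mathcal{K}\cong\mathrm{aff}(1,\mathbb{R})$, then every transitive local action of this algebra on a surface can, after a change of coordinates, be written as $X_1=\partial_{x^2}$, $X_2=x^1\partial_{x^1}+x^2\partial_{x^2}$ on the half-plane $x^1>0$; now $\mathcal{L}_{X_1}\nabla=0$ removes the $x^2$-dependence of the $\Gamma_{ij}{}^k$, after which $\mathcal{L}_{X_2}\nabla=0$ becomes an Euler system whose only solutions are $\Gamma_{ij}{}^k=c_{ij}{}^k/x^1$ with $c_{ij}{}^k\in\mathbb{R}$, so $\mathcal{M}$ is of Type~$\mathcal{B}$.

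Now suppose $\dim\mathcal{I}\geq1$, so $\dim\mathcal{K}\geq3$. If $\mathcal{K}$ contains a two dimensional subalgebra $\mathcal{H}$ with $\varepsilon(\mathcal{H})=T_pM$, then its generators are Killing fields that are linearly independent near $p$ and close under bracket as one of the two algebras above, so applying the previous paragraph to $\mathcal{H}$ already places $\mathcal{M}$ in Type~$\mathcal{A}$ or Type~$\mathcal{B}$. It remains to treat the case in which no such $\mathcal{H}$ exists. Here one uses that $\mathcal{I}$ is a nonzero subalgebra of $\mathrm{gl}(2,\mathbb{R})$ preserving $T_p$ and $\rho_p$: going through the possibilities for such a pair $(\mathcal{K},\mathcal{I})$, in every case but one the invariance constraints yield a two dimensional subalgebra of $\mathcal{K}$ evaluating onto $T_pM$, returning us to the preceding case. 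The exception is $\mathcal{K}\cong\mathrm{so}(3)$ with $\mathcal{I}\cong\mathrm{so}(2)$ acting on $T_pM$ by rotations; then $T_p=0$, since the rotation algebra fixes no nonzero element of $\Lambda^2T_p^*M\otimes T_pM$, so $\nabla$ is torsion free, and since the only $\mathrm{so}(3)$-invariant connection on $SO(3)/SO(2)\cong S^2$ is the Levi-Civita connection of the round metric, $\mathcal{M}$ is locally isomorphic to the round sphere, i.e.\ of Type~$\mathcal{C}$. Finally, since the round sphere is of neither Type~$\mathcal{A}$ nor Type~$\mathcal{B}$, all three classes are needed, while of course Types~$\mathcal{A}$ and~$\mathcal{B}$ are not mutually exclusive.

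The principal obstacle is the last step: showing that a locally homogeneous affine surface which is not locally the round sphere always carries a two dimensional algebra of local Killing fields acting simply transitively. This rests on the classification of low dimensional real Lie algebras and their subalgebras, together with the constraints imposed by the action of $\mathcal{I}$ on the torsion, the Ricci tensor, and the $2$-jet of $\nabla$ at $p$; the individual verifications are routine, but the enumeration of the pairs $(\mathcal{K},\mathcal{I})$ must be done with care so that no case is omitted.
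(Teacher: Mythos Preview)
The paper does not prove this theorem; it is quoted as a known result, attributed to Opozda in the torsion-free case and to Arias-Marco and Kowalski (with an alternative argument by Brozos-V\'azquez, Garc\'ia-R\'io and Gilkey) for surfaces with torsion. There is therefore no in-paper argument to compare against.

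Your outline is essentially the strategy of those references: study the Lie algebra $\mathcal{K}$ of local affine Killing fields, locate a two-dimensional subalgebra acting simply transitively, and read off the Type~$\mathcal{A}$ or Type~$\mathcal{B}$ normal form from whether that subalgebra is abelian or isomorphic to $\mathfrak{aff}(1,\mathbb{R})$. Your handling of the case $\dim\mathcal{K}=2$ is correct, as is the identification of $\mathcal{I}$ with a subalgebra of $\mathfrak{gl}(T_pM)$ fixing $T_p$ and $\rho_p$, and the observation that the pair $(\mathfrak{so}(3),\mathfrak{so}(2))$ forces the round sphere.

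The genuine gap is exactly what you label the ``principal obstacle'': the claim that, apart from $(\mathfrak{so}(3),\mathfrak{so}(2))$, every pair $(\mathcal{K},\mathcal{I})$ with $\dim\mathcal{I}\ge1$ admits a two-dimensional subalgebra $\mathcal{H}\subset\mathcal{K}$ with $\varepsilon(\mathcal{H})=T_pM$. You assert the enumeration is routine and then do not perform it; but this step \emph{is} the content of the theorem. For $\dim\mathcal{K}=3$ one must run through the Bianchi list and, for $\mathfrak{sl}(2,\mathbb{R})$ in particular, check that a Borel subalgebra is transverse to the given isotropy; for $\dim\mathcal{K}\ge4$ one must classify the admissible isotropies inside $\mathfrak{gl}(2,\mathbb{R})$ and verify in each case that a suitable $\mathcal{H}$ exists. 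None of this is deep, but a proof has to do it. As it stands you have a correct proof plan together with an honest acknowledgment that the decisive case analysis has been left out; either supply the enumeration, or do as the paper does and cite the references in which it is carried out.
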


The possibilities of Theorem~\ref{T3} are not exclusive; there are geometries which can be realized both as Type~$\mathcal{A}$
and Type~$\mathcal{B}$ structures. However, no Type~$\mathcal{A}$ or Type~$\mathcal{B}$ structure is also Type~$\mathcal{C}$.
We refer to Calvi\~no-Louzao et al.~\cite{CGGPR19} for additional information in this regard.

We now examine affine symmetric surfaces with non-parallel torsion; to obtain a useful classification, we shall not consider
the most general surfaces but restrict to locally homogeneous geometries. Theorem~\ref{T4} (resp. Theorem~\ref{T5}), to be
proved in Section~\ref{S3} (resp. Section~\ref{S4}) deals with surfaces of Type~$\mathcal{A}$ (resp. Type~$\mathcal{B}$).

\subsection{Type~$\mathcal{A}$ affine symmetric surfaces}
The general linear group $\operatorname{GL}(2,\mathbb{R})$ acts on the set of Type~$\mathcal{A}$
geometries by change of basis. We will establish the following result in Section~\ref{S3} which classifies
the Type~$\mathcal{A}$ symmetric affine surfaces with non-parallel torsion.

\begin{theorem}\label{T4}
 Let $\mathcal{M}$ be a Type~$\mathcal{A}$ symmetric affine surface with non-parallel torsion tensor. Then $\mathcal{M}$
 is flat (i.e. $\rho=0$) and $\mathcal{M}$ is equivalent under the action of the gauge group
 $\operatorname{GL}(2,\mathbb{R})$ to one of the following 6 geometries for
 $\alpha,\eta\in\mathbb{R}$, $\beta\in\mathbb{R}-\{0,2\}$,  $\gamma\in\mathbb{R}-\{0\}$,
 $\varepsilon=\pm1$, $\eta\geq0$,
and $T=(dx^1\wedge dx^2)\otimes\partial_{x^2}$, where no two different surfaces are linearly equivalent:
 \begin{enumerate}
  \item$\begin{array}[t]{lllll}\Gamma_{11}{}^1=\gamma,&\Gamma_{11}{}^2=\gamma-1,&\Gamma_{12}{}^1=0,&\Gamma_{12}{}^2=1,\\\Gamma_{21}{}^1=0,
  &\Gamma_{21}{}^2=-1,&\Gamma_{22}{}^1=0,&\Gamma_{22}{}^2=1,\end{array}$
  \quad$\nabla T=\left(\begin{array}{cc} 0 & -\gamma  \\ 0 & 0 \\\end{array}\right)$.\\
  \smallbreak
  \item$\begin{array}[t]{lllll}\Gamma_{11}{}^1=0,&\Gamma_{11}{}^2=\alpha,\hphantom{a....}
  &\Gamma_{12}{}^1=1,&\Gamma_{12}{}^2=2,\\
  \Gamma_{21}{}^1=1,&\Gamma_{21}{}^2=0,&\Gamma_{22}{}^1=0,&\Gamma_{22}{}^2=1,\end{array}$
  \quad$\nabla T=\left(\begin{array}{cc} 1 & 0  \\ 0 & -1 \\\end{array}\right)$.
  \smallbreak
  \item$\begin{array}[t]{lllll}\Gamma_{11}{}^1=\gamma,&\Gamma_{11}{}^2=0,\hphantom{.}&
  \Gamma_{12}{}^1=0,&\Gamma_{12}{}^2=\gamma,\\
  \Gamma_{21}{}^1=0,&\Gamma_{21}{}^2=\gamma -2,&\Gamma_{22}{}^1=0,&\Gamma_{22}{}^2=0,\end{array}$
  \quad\hphantom{.}$\nabla T=\left(\begin{array}{cc} 0 & -\gamma \\ 0 & 0 \\\end{array}\right)$.
  \smallbreak
  \item$\begin{array}[t]{lllll}\Gamma_{11}{}^1=2,&\Gamma_{11}{}^2=1,\hphantom{......}
  &\Gamma_{12}{}^1=0,&\Gamma_{12}{}^2=2,\\
  \Gamma_{21}{}^1=0,&\Gamma_{21}{}^2=0,&\Gamma_{22}{}^1=0,&\Gamma_{22}{}^2=0,\end{array}$
  \quad\hphantom{.}$\nabla T=\left(\begin{array}{cc} 0 & -2 \\ 0 & 0 \\\end{array}\right)$.
  \smallbreak
  \item$\begin{array}[t]{lllll}\Gamma_{11}{}^1=\beta,&\Gamma_{11}{}^2=0,\phantom{......}
  &\Gamma_{12}{}^1=0,&\Gamma_{12}{}^2=2,\\
  \Gamma_{21}{}^1=0,&\Gamma_{21}{}^2=0,&\Gamma_{22}{}^1=0,&\Gamma_{22}{}^2=0,\end{array}$
  \quad\hphantom{.}$\nabla T=\left(\begin{array}{cc} 0 & -\beta  \\ 0 & 0 \\\end{array}\right)$.
  \smallbreak
  \item$\begin{array}[t]{lllll}\Gamma_{11}{}^1=\omega,&\Gamma_{11}{}^2=0,&\Gamma_{12}{}^1=0,&\Gamma_{12}{}^2=\omega,\\
  \Gamma_{21}{}^1=0,&\Gamma_{21}{}^2=\omega -2,&\Gamma_{22}{}^1=\varepsilon,&\Gamma_{22}{}^2=\eta,\end{array}$
  \quad$\nabla T=\left(\begin{array}{cc} 0  & -\omega  \\ \varepsilon  & 0  \\\end{array}\right)$.
 \end{enumerate}
\end{theorem}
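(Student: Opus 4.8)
The plan is to convert the hypotheses into a finite polynomial problem and then quotient by the gauge group. On a Type~$\mathcal{A}$ geometry the Christoffel symbols, and hence $\rho$, $\nabla\rho$, $T$ and $\nabla T$, all have constant components, so ``symmetric'' becomes the algebraic system $\nabla_k\rho=0$ in the eight numbers $\Gamma_{ij}{}^k$ and ``non-parallel torsion'' the non-vanishing condition $\nabla T\neq0$. I would begin by normalizing the torsion: since $[\partial_{x^i},\partial_{x^j}]=0$ we have $T=(dx^1\wedge dx^2)\otimes V$ with $V=V^k\partial_{x^k}$ constant and $V^k=\tfrac12(\Gamma_{12}{}^k-\Gamma_{21}{}^k)$, so the torsion is non-vanishing exactly when $V\neq0$. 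A change of frame $\partial_{x^i}\mapsto B_i{}^j\partial_{x^j}$ carries $(dx^1\wedge dx^2)\otimes V$ to $\det(B)\,(d\tilde x^1\wedge d\tilde x^2)\otimes(B^{-1}V)$, so by a suitable $B\in\operatorname{GL}(2,\mathbb{R})$ we may and do assume $T=(dx^1\wedge dx^2)\otimes\partial_{x^2}$; equivalently $\Gamma_{12}{}^1=\Gamma_{21}{}^1$ and $\Gamma_{12}{}^2=\Gamma_{21}{}^2+2$, leaving six free Christoffel symbols. The residual gauge freedom is the isotropy group $G_0=\bigl\{\left(\begin{smallmatrix}1&b\\0&c\end{smallmatrix}\right):b\in\mathbb{R},\ c\in\mathbb{R}-\{0\}\bigr\}$ of $(dx^1\wedge dx^2)\otimes\partial_{x^2}$ inside $\operatorname{GL}(2,\mathbb{R})$, a two-dimensional solvable group. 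A short computation, using $R(\partial_{x^1},\partial_{x^2})\partial_{x^k}=(\Gamma_{2k}{}^l\Gamma_{1l}{}^m-\Gamma_{1k}{}^l\Gamma_{2l}{}^m)\partial_{x^m}$ and $(\nabla_mT)_{12}{}^k=\Gamma_{ml}{}^kT_{12}{}^l-\Gamma_{m1}{}^lT_{l2}{}^k-\Gamma_{m2}{}^lT_{1l}{}^k$, then shows that in this normalization $\nabla T$ is the endomorphism $\left(\begin{smallmatrix}\Gamma_{12}{}^1&-\Gamma_{11}{}^1\\\Gamma_{22}{}^1&-\Gamma_{21}{}^1\end{smallmatrix}\right)$, so $\nabla T\neq0$ means simply $(\Gamma_{11}{}^1,\Gamma_{12}{}^1,\Gamma_{22}{}^1)\neq(0,0,0)$.

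The core of the matter — and the step I expect to be the main obstacle — is to prove that a Type~$\mathcal{A}$ symmetric surface with $\nabla T\neq0$ must be \emph{flat}. Here I would impose $\nabla_k\rho=0$ on the six remaining constants and solve, organizing the discussion by $\operatorname{Rank}\{\rho\}$ and using that, by Lemma~\ref{L7}, $\rho$ is symmetric: if $\operatorname{Rank}\{\rho\}=2$ then $\rho$ is a parallel non-degenerate metric for $\nabla$, while if $\operatorname{Rank}\{\rho\}=1$ the identity $\nabla(\phi\otimes\phi)=0\Rightarrow\nabla\phi=0$ forces $\rho=\pm\,\phi\otimes\phi$ for a parallel $1$-form $\phi$. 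In either case, feeding $\nabla_k\rho=0$ through the six constants one finds that it drives $(\Gamma_{11}{}^1,\Gamma_{12}{}^1,\Gamma_{22}{}^1)$ to $(0,0,0)$, i.e. $\nabla T=0$, against our hypothesis (this is also consistent with Theorem~\ref{T2}, whose only symmetric surface with non-vanishing parallel torsion and $\rho\neq0$ is the geometry of Theorem~\ref{T2}(2)). Hence $\rho=0$; and since on a surface the curvature operator $R(\partial_{x^1},\partial_{x^2})$ is recovered algebraically from the entries of $\rho$, this is exactly flatness, $R=0$.

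It then remains to classify, up to $G_0$, the flat Type~$\mathcal{A}$ geometries with $T=(dx^1\wedge dx^2)\otimes\partial_{x^2}$ and $(\Gamma_{11}{}^1,\Gamma_{12}{}^1,\Gamma_{22}{}^1)\neq(0,0,0)$. I would solve the equations $R(\partial_{x^1},\partial_{x^2})\partial_{x^k}=0$ for the six remaining parameters, decompose the solution set into its components, and on each component use the two-dimensional group $G_0$ to reach a normal form; this produces the six families (1)--(6), with the flatness giving $\rho=0$ and $\nabla T$ being the displayed $2\times2$ matrices. To verify that the list is irredundant, I would exhibit for each geometry enough invariants under $G_0$ — hence, after the torsion normalization, under all of $\operatorname{GL}(2,\mathbb{R})$ — built from $\nabla T$, $\nabla^2T$ and the part of the connection not already fixed by $\nabla T$, to separate the six families and to pin down the value of the free parameter in each; the ranges $\gamma\neq0$, $\beta\notin\{0,2\}$, $\varepsilon=\pm1$, $\eta\ge0$ then appear precisely as the conditions under which $T$ is non-parallel and distinct parameter values yield non-equivalent geometries. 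The genuinely laborious parts are the solution of the (overdetermined, non-generic) polynomial systems $\nabla_k\rho=0$ and $R=0$ together with the bookkeeping of components, and the construction of a separating set of $G_0$-invariants in the last step.
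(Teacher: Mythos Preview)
Your proposal is correct in outline and shares the paper's overall strategy: normalize the torsion to $T=(dx^1\wedge dx^2)\otimes\partial_{x^2}$, reduce to six free Christoffel constants with residual gauge group the $ax+b$ group, and then solve the resulting polynomial equations. Your identification of the residual gauge group and your formula for $\nabla T$ agree with the paper.

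Where you differ is in the organization of the algebra. You propose a two-phase attack: first argue (by a rank analysis of $\rho$) that $\nabla\rho=0$ together with $\nabla T\neq0$ forces $\rho=0$, and only then classify the flat geometries under $G_0$. The paper does not separate these steps. It splits immediately into the three cases $\Gamma_{22}{}^1=0,\ \Gamma_{22}{}^2\neq0$; $\Gamma_{22}{}^1=\Gamma_{22}{}^2=0$; and $\Gamma_{22}{}^1\neq0$, and within each case it alternates between imposing components of $\nabla\rho=0$ and applying the shear of Lemma~\ref{L9} to kill remaining parameters. Flatness is never argued for; it simply falls out once the six normal forms are written down. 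The paper's interleaving of equation-solving and gauge-fixing keeps the polynomial systems small (typically one variable at a time), whereas in your scheme the system $\nabla_k\rho=0$ must be confronted before any of the $G_0$-freedom has been spent. Your rank-based heuristic (``$\rho$ is a parallel metric'' in rank~2, ``$\rho=\pm\phi\otimes\phi$ with $\nabla\phi=0$'' in rank~1) is correct as stated, but note that neither observation by itself forces $(\Gamma_{11}{}^1,\Gamma_{12}{}^1,\Gamma_{22}{}^1)=0$; you still have to run the same polynomial elimination the paper does, so the conceptual packaging does not actually shorten the computation. On the other hand, your plan to distinguish the families by constructing $G_0$-invariants from $\nabla T$ and $\nabla^2T$ is a genuine alternative to the paper's approach, which instead verifies inequivalence implicitly by fully fixing the gauge in each branch.
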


\begin{remark}\rm
	We choose $\eta\geq 0$ because any surface of family (6) given by $(\omega,\eta)$ is equivalent to $(\omega,-\eta)$ thru $x_2\to-x_2$. The constraint $\beta\neq 2$ in the surfaces of family (5) ensures non-equivalence with the surface (3) with $\gamma=2$.
\end{remark}

\subsection{Type~$\mathcal{B}$ affine symmetric surfaces}
If $\tilde\Gamma_{ij}{}^k\in\mathbb{R}$, we construct a Type~$\mathcal{B}$ geometry by setting
$\Gamma_{ij}{}^k=\frac1{x^1}\tilde \Gamma_{ij}{}^k$.
To simplify denominators, we evaluate at $x^1=1$ to define $\tilde\rho$, $\tilde T$, and $\widetilde{\nabla T}$. We then have $\rho=(x^1)^{-2}\tilde\rho$, $T=(x^1)^{-1}\tilde T$, and
$\nabla T=(x^1)^{-2}\widetilde{\nabla T}$.
The $ax+b$ group acts on the set of Type~$\mathcal{B}$ geometries
by the linear change of basis $(x^1,x^2)\rightarrow(x^1,ax^2+bx^1)$. In Section~\ref{S4}, we complete our classification of the
locally homogeneous symmetric affine surfaces by establishing the following result.

\begin{theorem}\label {T5}
Let $\mathcal{M}$ be a Type~$\mathcal{B}$ symmetric affine surface with non-parallel torsion
tensor. Then $\mathcal{M}$ is equivalent under the action of the
$ax+b$ gauge group to one of the following 9 structures with associated parameters
 $\xi,\ \eta,\alpha,\ \beta,\ \gamma,\ \delta\in\mathbb{R}$ for
  $\alpha\geq 0$; $\eta+1\neq2\delta$; $\gamma\neq-\frac12$. The torsion is given by
$T=(x^1)^{-1}(dx^1\wedge dx^2)\otimes(T^1\partial_{x^1}+T^2\partial_{x^2})$. No two different surfaces in this classification are linearly equivalent.
\begin{enumerate}
\setlength\itemsep{2mm}
\addtolength{\itemindent}{-.7cm}
\item$\begin{array}[t]{llllll}
\tilde\Gamma_{11}{}^1=-2, &\tilde\Gamma_{11}{}^2=\xi , &\tilde\Gamma_{12}{}^1=0, &\tilde\Gamma_{12}{}^2=0 , & \tilde\Gamma_{21}{}^1=-1,\\
\tilde\Gamma_{21}{}^2= \xi, &\tilde\Gamma_{22}{}^1=0, &\tilde\Gamma_{22}{}^2=1,
& T^1=\frac12, & T^2= -\frac{\xi}{2},
\end{array}$
\medbreak
\noindent$\tilde\rho=\left(
\begin{array}{cc}
 \xi  & 1 \\
1 & 0 \\
\end{array}
\right)$,
\quad
$\widetilde{\nabla T}=-\frac12\left(
\begin{array}{cc}
1 & 0  \\
1 & 0
\end{array}
\right)$.
\smallbreak
\item$\begin{array}[t]{lllll}\tilde\Gamma_{11}{}^1=\eta,&
 \tilde\Gamma_{11}{}^2=0,&\tilde\Gamma_{12}{}^1=0,&
  \tilde\Gamma_{12}{}^2=\eta + 1 ,&\tilde\Gamma_{21}{}^1=0,\\
  \tilde\Gamma_{21}{}^2=\eta +1 -2\delta,& \tilde\Gamma_{22}{}^1=0,&
  \tilde\Gamma_{22}{}^2=0,&T^1=0,& T^2=\delta, \end{array}$
  \medbreak
  \noindent$\rho=0$,
  \quad$\widetilde{\nabla T}=\left(
  \begin{array}{cc}
  0 & { -\delta  (1 +\eta)} \\
  0 & 0 \\
  \end{array}
  \right)$.
  \smallbreak
\item$\begin{array}[t]{lllll}\tilde\Gamma_{11}{}^1=2\beta -2,&
  \tilde\Gamma_{11}{}^2=1,&\tilde\Gamma_{12}{}^1=0,&
  \tilde\Gamma_{12}{}^2=2\beta-1 ,&\tilde\Gamma_{21}{}^1=0,\\
  \tilde\Gamma_{21}{}^2=-1,& \tilde\Gamma_{22}{}^1=0,&
  \tilde\Gamma_{22}{}^2=0,&T^1=0,& T^2=\beta, \end{array}$
  \medbreak
  \noindent$\rho=0$,
  \quad$\widetilde{\nabla T}=\left(
  \begin{array}{cc}
  0 & -\beta  (2\beta -1) \\
  0 & 0 \\
  \end{array}
  \right)$.
  \smallbreak
\item$\begin{array}[t]{lllll}
  \tilde\Gamma_{11}{}^1=0 ,&\tilde\Gamma_{11}{}^2=\xi ,
  &\tilde\Gamma_{12}{}^1=1,& \tilde\Gamma_{12}{}^2=2 \beta ,&\tilde\Gamma_{21}{}^1=0,\\
  \tilde\Gamma_{21}{}^2=0,&\tilde\Gamma_{22}{}^1=0,&\tilde\Gamma_{22}{}^2=0,&
  T^1=\frac12 ,&T^2=\beta, \end{array}$
  \medbreak
  \noindent$\rho=0$,
  \quad
  $\widetilde{\nabla T}=\frac12\left(
  \begin{array}{cc}
  -1 & { \xi-2\beta }  \\
  0 & 0 \\
  \end{array}
  \right)$.
  \smallbreak
\item$\begin{array}[t]{lllll}
  \tilde\Gamma_{11}{}^1=\xi ,&\tilde\Gamma_{11}{}^2=0 ,
  &\tilde\Gamma_{12}{}^1=0,& \tilde\Gamma_{12}{}^2=2 \beta ,&\tilde\Gamma_{21}{}^1=0,\\
  \tilde\Gamma_{21}{}^2=0,&\tilde\Gamma_{22}{}^1=0,&\tilde\Gamma_{22}{}^2=0,&
  T^1=0 ,&T^2=\beta, \end{array}$
  \medbreak
  \noindent$\rho=0$,
  \quad$\widetilde{\nabla T}=\left(
  \begin{array}{cc}
  0 & { -\beta(1+\xi) }  \\
  0 & 0 \\
  \end{array}
  \right)$.
  \smallbreak
\item$\begin{array}[t]{lllll}
  \tilde\Gamma_{11}{}^1=2\beta ,&\tilde\Gamma_{11}{}^2=1 ,
  &\tilde\Gamma_{12}{}^1=0,& \tilde\Gamma_{12}{}^2=2 \beta ,&\tilde\Gamma_{21}{}^1=0,\\
  \tilde\Gamma_{21}{}^2=0,&\tilde\Gamma_{22}{}^1=0,&\tilde\Gamma_{22}{}^2=0,&
  T^1=0 ,&T^2=\beta, \end{array}$
  \medbreak
  \noindent$\rho=0$,
  \quad
  $\widetilde{\nabla T}=\left(
  \begin{array}{cc}
  0 & { -\beta(1+2\beta) }  \\
  0 & 0 \\
  \end{array}
  \right)$.
  \smallbreak
\item$\begin{array}[t]{lllll}\tilde\Gamma_{11}{}^1=\xi ,&\tilde\Gamma_{11}{}^2=0,&
  \tilde\Gamma_{12}{}^1=2 \alpha,& \tilde\Gamma_{12}{}^2=-1,&\tilde\Gamma_{21}{}^1=0,\\
  \tilde\Gamma_{21}{}^2=0,& \tilde\Gamma_{22}{}^1=\varepsilon ,&\tilde\Gamma_{22}{}^2=0,&
  T^1=\alpha ,& T^2=-\frac{1}{2},\end{array}$
  \medbreak
  \noindent$\tilde\rho=\left(
  \begin{array}{cc}
  0 & 0 \\
  0 & \varepsilon\xi    \\
  \end{array}
  \right)$,
  \quad$\widetilde{\nabla T}=-\frac12\left(
  \begin{array}{cc}
  2\alpha &  -\xi-1\\
  \varepsilon & 0 \\
  \end{array}
  \right)$.
  \smallbreak
\item$\begin{array}[t]{lllll}\tilde\Gamma_{11}{}^1=2 \gamma +1,&
  \tilde\Gamma_{11}{}^2=0,&\tilde\Gamma_{12}{}^1=2 \alpha,&
  \tilde\Gamma_{12}{}^2=2 \gamma ,&\tilde\Gamma_{21}{}^1=0,\\
  \tilde\Gamma_{21}{}^2=0,& \tilde\Gamma_{22}{}^1=\varepsilon ,&
  \tilde\Gamma_{22}{}^2=0,&T^1=\alpha ,&T^2=\gamma, \end{array}$
  \medbreak
  \noindent$\rho=0$,
  \quad$\widetilde{\nabla T}=\left(
  \begin{array}{cc}
  -\alpha &  -2\gamma  (\gamma +1) \\
  \varepsilon\gamma  & 0 \\
  \end{array}
  \right)$.
  \smallbreak
\item$\begin{array}[t]{lllll}\tilde\Gamma_{11}{}^1=-1,&\tilde\Gamma_{11}{}^2
  =-{2\varepsilon \alpha (2   \gamma +1 )},&\tilde\Gamma_{12}{}^1=2 \alpha,&
  \tilde\Gamma_{12}{}^2=-1,\\\tilde\Gamma_{21}{}^1=0,&\tilde\Gamma_{21}{}^2=-2 \gamma -1,&
  \tilde\Gamma_{22}{}^1=\varepsilon ,&\tilde\Gamma_{22}{}^2=0,\\T^1=\alpha ,& T^2=\gamma, \end{array}$
  \medbreak
  \noindent$\tilde\rho=\left(
  \begin{array}{cc}
  -2 \gamma -1 & 0 \\
  0 & -\varepsilon  \\
  \end{array}
  \right)$,
  \quad$\widetilde{\nabla T}=\left(
  \begin{array}{cc}
  2 \alpha  \gamma  &  -2\varepsilon \alpha ^2 (2 \gamma +1) \\
  \varepsilon\gamma  & -\alpha  (2 \gamma +1) \\
  \end{array}
  \right)$.
 \end{enumerate}
\end{theorem}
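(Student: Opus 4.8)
The plan is to turn the classification into the solution of an explicit polynomial system in the eight real parameters $\tilde\Gamma_{ij}{}^k$ of a Type~$\mathcal{B}$ connection, and then to sort the solutions into orbits of the $ax+b$ gauge group. \emph{Setting up the equations:} writing $\Gamma_{ij}{}^k=(x^1)^{-1}\tilde\Gamma_{ij}{}^k$ with $\tilde\Gamma_{ij}{}^k\in\mathbb{R}$ and $x^1>0$, a direct computation gives $\rho=(x^1)^{-2}\tilde\rho$, $T=(x^1)^{-1}\tilde T$, and $\nabla T=(x^1)^{-2}\widetilde{\nabla T}$, where $\tilde\rho,\tilde T,\widetilde{\nabla T}$ have components that are quadratic polynomials in the $\tilde\Gamma_{ij}{}^k$; here $\tilde T$ is carried by the torsion vector $T^1\partial_{x^1}+T^2\partial_{x^2}$ with $2T^k=\tilde\Gamma_{12}{}^k-\tilde\Gamma_{21}{}^k$. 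Imposing that $\mathcal{M}$ be a symmetric affine surface means $\nabla\rho=0$; since $\rho=(x^1)^{-2}\tilde\rho$ this is a genuine constraint, and expanding $\nabla_{\partial_{x^i}}\rho$ and collecting powers of $x^1$ yields a finite list of quadratic equations $P_\mu(\tilde\Gamma)=0$, which by Lemma~\ref{L7} already force $\rho=\rho^t$. The hypothesis that the torsion be non-parallel is then the single open condition $\widetilde{\nabla T}\neq0$, which in particular entails $\tilde T\neq0$.

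\emph{The gauge action:} the $ax+b$ group acts by $(x^1,x^2)\mapsto(x^1,ax^2+bx^1)$, $a\neq0$, inducing a linear action on $\mathbb{R}^8=\{\tilde\Gamma_{ij}{}^k\}$ that preserves the variety $\{P_\mu=0\}$ and the open set $\{\widetilde{\nabla T}\neq0\}$. The two parameters $a,b$ supply normalizing freedom: $b$ to eliminate a distinguished component, and $a$ to rescale a surviving coefficient of $\tilde T$ or $\tilde\rho$ to a standard value; since $a^2>0$ a residual sign survives, which is what produces the constraints $\varepsilon=\pm1$ and $\alpha\ge0$ and the reflection $x^2\mapsto-x^2$. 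Because the action is triangular in a suitable ordering of the coordinates, this normalization can be carried out one coordinate at a time.

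\emph{Case analysis:} I would stratify the solution variety of $\{P_\mu=0\}$ by the rank and signature of $\tilde\rho$ and by the position of the torsion vector relative to the distinguished direction $\partial_{x^1}$ (whether $T^1=0$, whether the torsion vector is null for $\rho$, whether it lies in $\ker\widetilde{\nabla T}$). In each stratum the quadratic system can be solved and, after applying the normalizations above, one arrives at one of the nine listed families with the stated parameter ranges; the exclusions $\eta+1\neq2\delta$ in~(2) and $\gamma\neq-\tfrac12$ in~(8) are precisely the loci where those families degenerate onto another member of the list (families~(5) and~(7) respectively). Conversely, each of the nine families is checked by direct substitution to be a symmetric affine surface with $\widetilde{\nabla T}\neq0$. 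I expect this to be the main obstacle: it is a lengthy but mechanical Gr\"obner-style elimination with many degenerate sub-cases to keep under control, and is best organized with a computer algebra system.

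\emph{Inequivalence:} finally I would separate the nine families using gauge invariants. The signature of $\rho$ isolates the families with $\tilde\rho\neq0$ and distinguishes, among them, the rank-$2$ indefinite case from the rank-$1$ cases. For the flat families, and to refine within a fixed $\rho$-signature, I would use the trace, determinant and rank of $\widetilde{\nabla T}$ regarded as an endomorphism via the volume form, together with whether the torsion vector lies in $\ker\widetilde{\nabla T}$ or spans $\operatorname{im}\widetilde{\nabla T}$. An equivalence between two list members would force an identification of the parameter tuples; evaluating these invariants as functions of the parameters shows the only possible identification is the identity, up to the reflection already absorbed into the sign conventions. This completes the proof.
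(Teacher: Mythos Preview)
Your overall framework---reduce to a polynomial system in the eight parameters $\tilde\Gamma_{ij}{}^k$, impose $\nabla\rho=0$, then normalize by the $ax+b$ action---matches the paper's, and your remarks on the degenerations (family~(2) with $\eta+1=2\delta$ falling into~(5), family~(8) with $\gamma=-\tfrac12$ falling into~(7)) are correct. Where you diverge is in the \emph{organizing principle} of the case analysis. You propose to stratify by the rank and signature of $\tilde\rho$ and by properties of the torsion vector. The paper instead follows the pattern already set up for Type~$\mathcal{A}$ via Lemma~\ref{L9}: since ${}^y\!A_{22}{}^1=a^2\,{}^x\!A_{22}{}^1$ and ${}^y\!A_{22}{}^2=a({}^x\!A_{22}{}^2-b\,{}^x\!A_{22}{}^1)$, the conditions $\tilde A_{22}{}^1=0$ and (when that holds) $\tilde A_{22}{}^2=0$ are gauge invariant, and one obtains a clean trichotomy that maps directly onto the nine families (Case~1 $\to$ family~(1); Case~2 $\to$ families~(2)--(6); Case~3 $\to$ families~(7)--(9)). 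The paper also extracts, at the outset, the single scalar relation $\tilde A_{12}{}^1=\tilde T^1-\tilde A_{22}{}^2$ coming from $\rho_{12}=\rho_{21}$, which immediately cuts one parameter before any branching.

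Your stratification by $\operatorname{rank}(\tilde\rho)$ is less efficient here because six of the nine families are flat, so most of the work is still ahead of you after that first cut; the paper's $\tilde A_{22}{}^k$ trichotomy separates the flat families more evenly. Your plan is viable, but what you have written is an outline rather than a proof: the actual case analysis (which the paper does carry out, computing specific components like $\tilde\rho_{22;2}$, $\tilde\rho_{12;1}$, $\tilde\rho_{11;2}$ and solving them in sequence) is where the content lies, and ``Gr\"obner-style elimination with a computer algebra system'' does not substitute for it. One small caution on your inequivalence argument: $\widetilde{\nabla T}$ is a section of $T^*M\otimes\Lambda^2T^*M\otimes TM$, so its trace and determinant as a matrix are \emph{not} gauge invariants under the $ax+b$ action (they scale with $a$); only the rank, and ratios formed against other tensors of the same weight, are available. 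The paper handles inequivalence more lightly, via the Remark following the theorem.
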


\begin{remark}\rm
We choose $\alpha\geq 0$ because any surface of family (7) given by $(\xi,\alpha)$ is equivalent to $(\xi,-\alpha)$ thru $x_2\to-x_2$. 
The same observation holds for families (8) and (9). The remaining constraints on $\eta,\gamma,\delta$ ensure non-equivalence between different families. 
Note that $\gamma=-1/2$ in (8) gives (7) for $\xi=0$, and that $\gamma=-1/2$ in (9) gives (7) for $\xi=-1$.
\end{remark}

\subsection{Symmetric affine surfaces which are not locally homogeneous}
As for the most part, we shall be concerned with locally homogeneous geometries, we conclude the introduction by presenting
two examples of symmetric affine surfaces which are not locally homogeneous.
\goodbreak\begin{example}\rm
\ \begin{enumerate}
\item Let the non-zero symbols be $\Gamma_{12}{}^2=\frac12\tanh(x^1)$ and $\Gamma_{21}{}^2=-\frac12\tanh(x^1)$. Then $\rho=dx^1\otimes dx^1$,
$\nabla\rho=0$, and $\nabla T=(\cosh{x_1})^{-2}\,dx^1\wedge dx^2\otimes\partial_{x^2}$.
The space of affine Killing vector fields is given by
$\operatorname{Span}\{1,x^1,x^2\}\partial_{x^2}$. Thus this is a symmetric affine surface of cohomogeneity 1.
\item Let $\{X,Y\}$ be a frame for the tangent bundle of $M$. There is a unique
 connection with torsion so $\nabla X=0$ and $\nabla Y=0$. Let $\{X^*,Y^*\}$ be the corresponding
 dual frame for the cotangent bundle and let $[X,Y]$ denote the Lie bracket of $X$ and $Y$.
 We have
 $T=\frac12(X^*\wedge Y^*)\otimes[X,Y]$ and the geometry is flat.
\end{enumerate}\end{example}

\section{Proof of Theorem~\ref{T2}}
\label{S2}

\subsection{The Ricci tensor of a symmetric affine surface}
The fact that the Ricci tensor of a symmetric affine surface is a symmetric 2-tensor is due to
Opozda~\cite{Op91} in the torsion free setting; it is not known if a similar statement holds in higher dimensions. We can extend this
result to the setting of affine surfaces with torsion.

\begin{lemma}\label{L7}
If $\mathcal{M}$ is a connected symmetric affine surface, then the Ricci tensor of $\mathcal{M}$ is a symmetric 2-tensor
which has constant rank.
\end{lemma}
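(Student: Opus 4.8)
The plan is to work at a point $p\in M$ and study the curvature operator $R$ together with the Ricci tensor $\rho$ in the presence of the two hypotheses: $\dim M=2$ and $\nabla R=0$. First I would exploit dimension. In two dimensions the space of $2$-forms is one-dimensional, so the curvature operator can be written as $R(X,Y)=\Omega(X,Y)\,A$ for a fixed scalar-valued $2$-form $\Omega$ (a volume form, up to scale) and a fixed endomorphism field $A$ of the tangent bundle; equivalently $R = \Omega\otimes A$. Contracting the appropriate index gives $\rho(X,Y)=\operatorname{Trace}(Z\mapsto R(Z,X)Y)=\Omega(\,\cdot\,,X)$-paired-with-$AY$, i.e. $\rho$ is expressed in terms of $\Omega$ and $A$. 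The symmetry of $\rho$ is then equivalent to an algebraic condition relating $A$ and the (skew) form $\Omega$, which one checks holds automatically once one knows that $A$ is, after raising an index, self-adjoint with respect to $\Omega$ — and this is exactly what $\nabla R=0$ will force.

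Next I would bring in $\nabla R=0$. Differentiating the decomposition $R=\Omega\otimes A$ and using that $\nabla\Omega$ is a multiple of $\Omega$ (since $\Omega$ spans the rank-one bundle $\Lambda^2$), one finds that $\nabla A$ is itself a multiple of $A$: there is a $1$-form $\omega$ with $\nabla A=\omega\otimes A$ and correspondingly $\nabla\Omega=-\omega\otimes\Omega$ so that the product has vanishing derivative. The key structural consequence is that the image and kernel of $A$ at each point, hence the rank of $A$, are parallel, so the rank of $A$ is constant on the connected manifold $M$. Since $\rho$ is built from $A$ and the nowhere-zero form $\Omega$, the rank of $\rho$ equals the rank of $A$ (both are $0$, $1$, or $2$ in tandem), giving the constant-rank assertion. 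For the symmetry of $\rho$: using $\nabla\rho=0$ (which is the stated equivalent form of $\nabla R=0$ in dimension two), the second Bianchi identity, and the first Bianchi identity adjusted for torsion, one derives that the antisymmetric part $\rho_{[\,\cdot\,,\,\cdot\,]}$ is a parallel $2$-form, hence a constant multiple of $\Omega$; a separate trace/contraction computation — carried out pointwise and only involving the algebra of $2\times2$ matrices — shows this multiple must vanish. (In the torsion-free case this is Opozda's argument; the torsion terms contribute extra pieces to the Bianchi identities, and one must check they do not obstruct the conclusion.)

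I expect the main obstacle to be precisely the bookkeeping of the torsion contributions. With torsion, the first Bianchi identity acquires terms in $\nabla T$ and $T\cdot T$, and the second Bianchi identity acquires a term coupling $R$ to $T$; one must verify that when these are fed into the computation of the antisymmetric part of $\rho$, the result is still a parallel section of $\Lambda^2$ whose scalar coefficient is forced to be zero. The cleanest route is probably to fix a local frame, write everything in components, reduce the statement to a pair of identities among the Christoffel symbols and their first derivatives using $\nabla R = 0$, and verify those by direct (if tedious) calculation; the two-dimensionality keeps all the linear-algebra steps trivial, so the only real work is confirming that no torsion term survives. As a sanity check I would specialize to $T=0$ to recover Opozda's statement, and to the explicit geometries of Theorem~\ref{T2} to see the constant rank ($0$ in case (1), $1$ in case (2)) and the symmetry of $\rho$ directly.
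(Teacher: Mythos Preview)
Your plan is broadly on the right track---showing that the antisymmetric part $\rho_a$ of $\rho$ is parallel and then that its coefficient must vanish is exactly what the paper does---but you are reaching for much heavier machinery than is needed, and the place you identify as the main obstacle (torsion bookkeeping in the Bianchi identities) is a red herring. First, $\nabla\rho=0$ immediately gives $\nabla\rho_a=0$; no Bianchi identity is required, with or without torsion. Second, and this is the key simplification you are missing: once $\rho_a$ is parallel, the curvature operator must annihilate it, i.e.\ $R(\partial_1,\partial_2)\rho_a=0$, and this identity holds regardless of torsion since it is just $[\nabla_1,\nabla_2]-\nabla_{[\partial_1,\partial_2]}$ applied to a tensor with vanishing first covariant derivative. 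Computing $R_{12}$ on the $2$-form $dx^1\wedge dx^2$ gives multiplication by $-(R_{121}{}^1+R_{122}{}^2)=\rho_{12}-\rho_{21}$, so the single equation $R_{12}\rho_a=0$ reads $(\rho_{12}-\rho_{21})^2=0$. That is the entire symmetry argument; there is no torsion term to track.

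For constant rank, your decomposition $R=\Omega\otimes A$ and the conclusion $\nabla A=\omega\otimes A$ can be made to work, but it is again more than you need, and the step ``image and kernel of $A$ are parallel, hence rank is constant'' needs care at points where $A$ might vanish. The paper's argument is one line: pick a curve from $P$ to $Q$, parallel-transport a frame $\{e_1,e_2\}$ along it, and observe that $\partial_t\rho(e_i(t),e_j(t))=0$ since $\nabla\rho=0$ and $\nabla_{\partial_t}e_i=0$; so the matrix of $\rho$ in this frame is literally constant along the curve, and in particular its rank does not change. This holds for any parallel tensor on any connected affine manifold, torsion or not.
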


\begin{proof}
Extend the action of the curvature operator to tensors of all types.
The {\it alternating Ricci tensor} is defined by setting $\rho_a:=(\rho_{12}-\rho_{21})dx^1\wedge dx^2$.
As the commutator of covariant differentiation is given by curvature, one has:
$$
\begin{array}{l}
\rho_{a;21}-\rho_{a;12}=(\rho_{12}-\rho_{21})R_{12}(dx^1\wedge dx^2)
\\[0.03in]
\phantom{\rho_{a;21}-\rho_{a;12}}
=(\rho_{12}-\rho_{21})(-R_{121}{}^1-R_{122}{}^2)(dx^1\wedge dx^2)
\\[0.03in]
\phantom{\rho_{a;21}-\rho_{a;12}}
=(\rho_{12}-\rho_{21})(-\rho_{21}+\rho_{12})(dx^1\wedge dx^2)\,.
\end{array}
$$
If $\nabla\rho=0$, then $\nabla\rho_a=0$ and thus $(\rho_{12}-\rho_{21})^2=0$ and $\rho_a=0$. This shows that the Ricci tensor of $\mathcal{M}$ is symmetric.

We have assumed that $M$ is connected. Given points $P$ and
$Q$, let $\sigma(t)$ be a curve from $P$ to $Q$. Let $\{e_1(t),e_2(t)\}$ be a parallel frame for the tangent bundle along $\sigma(t)$.
Since $\nabla\rho=0$, we compute
\begin{eqnarray*}
 &&\partial_t\{\rho(e_i(t),e_j(t))\}\\
 &=&\{\nabla_{\partial_t}\rho\}(e_i(t),e_j(t))-\rho(\nabla_{\partial_t}e_i(t),e_j(t))-\rho(e_i(t),\nabla_{\partial_t}e_j(t))=0\,.
\end{eqnarray*}
Thus the matrix of $\rho$ is constant and $\operatorname{Rank}(\rho)$ is constant.
\end{proof}

\subsection{Abstract torsion tensors} Let $(M,\nabla)$ be an affine surface.  Let $(x^1,x^2)$ be a system of local coordinates on $M$.
In terms of the Christoffel symbols the torsion tensor takes the form
$$
T:=\textstyle\frac12(dx^1\wedge dx^2)\otimes\{(\Gamma_{12}{}^1-\Gamma_{21}{}^1)\partial_{x^1}
+(\Gamma_{12}{}^2-\Gamma_{21}{}^2)\partial_{x^2}\}\,.
$$
Let $\mathfrak{T}(M)$ be the vector space of 2-form valued tangent vector fields; $S\in\mathfrak{T}(M)$
if there are smooth functions $S^1$ and $S^2$ so that
$S=(dx^1\wedge dx^2)\otimes(S^1\partial_{x^1}+S^2\partial_{x^2})$.
This is the space of {\it abstract torsion tensors}.
Let $\mathfrak{P}(\mathcal{M}):=\{S\in\mathfrak{T}(\mathcal{M}):\nabla S=0\}$
be the subspace of {\it parallel abstract torsion tensors}.
\begin{lemma}\label{L8}
If $\mathcal{M}$ is an affine surface, then $\operatorname{Rank}\{\rho\}+\dim\{\mathfrak{P}\}\le2$.
\end{lemma}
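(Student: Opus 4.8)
The plan is to use the elementary fact that a parallel section of a vector bundle is annihilated by the entire curvature operator, applied to the rank-$2$ bundle $\mathfrak{T}(M)=\Lambda^2T^*M\otimes TM$ whose sections are the abstract torsion tensors. Parallel sections of a rank-$2$ bundle over a connected base already form a space of dimension at most $2$, so the real content of the lemma is the subtraction of $\operatorname{Rank}\{\rho\}$; this will come from computing the induced curvature action on $\mathfrak{T}(M)$, identifying its rank with $\operatorname{Rank}\{\rho\}$, and noting that every parallel abstract torsion tensor must lie in the kernel of that action at each point.

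The first step is to compute the action of $R(\partial_{x^1},\partial_{x^2})$ on $\mathfrak{T}(M)$ in the frame $e_1:=(dx^1\wedge dx^2)\otimes\partial_{x^1}$, $e_2:=(dx^1\wedge dx^2)\otimes\partial_{x^2}$. Using the Leibniz rule $R(\partial_{x^1},\partial_{x^2})(\omega\otimes V)=(R(\partial_{x^1},\partial_{x^2})\omega)\otimes V+\omega\otimes R(\partial_{x^1},\partial_{x^2})V$, the action on $dx^1\wedge dx^2$ already recorded in the proof of Lemma~\ref{L7}, and the identities $\rho_{ij}=R_{kij}{}^k$ together with $R_{iik}{}^l=0$ (which yield $R_{121}{}^1=\rho_{21}$, $R_{122}{}^2=-\rho_{12}$, $R_{122}{}^1=\rho_{22}$, $R_{121}{}^2=-\rho_{11}$), one finds that $R(\partial_{x^1},\partial_{x^2})$ is represented on $\mathfrak{T}(M)$ by
$$
A=\begin{pmatrix}\rho_{12}&\rho_{22}\\ -\rho_{11}&-\rho_{21}\end{pmatrix}.
$$
No symmetry of $\rho$ is used here, so this is valid for an arbitrary affine surface; the only care needed is the sign bookkeeping when passing between $TM$ and $T^*M$.

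Next I would observe that $\operatorname{Rank}A=\operatorname{Rank}\{\rho\}$ pointwise: indeed $\det A=\rho_{11}\rho_{22}-\rho_{12}\rho_{21}=\det\rho$, and $A=0$ precisely when $\rho=0$, so the ranks agree in each of the cases $0,1,2$. Finally, if $S\in\mathfrak{P}(\mathcal{M})$ then $\nabla S=0$ forces $R(X,Y)S=0$ for all $X,Y$; evaluating at a point $p$ shows $(S^1(p),S^2(p))\in\ker A(p)$. Since a parallel section vanishing at $p$ vanishes on the whole (connected) surface, $S\mapsto(S^1(p),S^2(p))$ is an injection of $\mathfrak{P}(\mathcal{M})$ into $\ker A(p)$, whence $\dim\{\mathfrak{P}\}\le\dim\ker A(p)=2-\operatorname{Rank}A(p)=2-\operatorname{Rank}\{\rho\}$, which rearranges to the assertion. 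The main obstacle — essentially the only nonroutine point — is the curvature computation producing the matrix $A$; once that is in hand, matching its rank to that of $\rho$ and invoking the parallel-transport argument are formal.
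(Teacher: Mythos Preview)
Your proof is correct and follows essentially the same route as the paper's: both compute the curvature action $R(\partial_{x^1},\partial_{x^2})$ on a parallel abstract torsion tensor, obtain the same linear system (the paper records it as Equation~(\ref{E1}), which is your matrix $A$ after a harmless row operation), and conclude via the kernel. Your packaging with rank--nullity is slightly slicker than the paper's case split ($\dim\mathfrak{P}\ge1\Rightarrow\operatorname{Rank}\rho\le1$, $\dim\mathfrak{P}=2\Rightarrow\rho=0$), but the substance is identical.
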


\begin{proof}
Since $M$ is connected, a parallel tensor is determined by its value at any point of $M$.
Thus $\dim\{\mathfrak{P}\}\le2$. Suppose that
$0\ne S\in\mathfrak{P}(\mathcal{M})$ is a parallel abstract torsion tensor. We compute:
\begin{eqnarray*}
 0&=&S_{;12}-S_{;21}=R_{12}(dx^1\wedge dx^2)\otimes S^k\partial_{x^k}+(dx^1\wedge dx^2)\otimes R_{12\ell}{}^kS^\ell\partial_{x^k}\\
 &=&\{(-R_{121}{}^1-R_{122}{}^2)S^1+(R_{121}{}^1S^1+R_{122}{}^1S^2)\}(dx^1\wedge dx^2)\otimes\partial_{x^1}\\
 &&\mbox{}+\{(-R_{121}{}^1-R_{122}{}^2)S^2+(R_{121}{}^2S^1+R_{122}{}^2S^2)\}(dx^1\wedge dx^2)\otimes\partial_{x^2}\\
 &=&(dx^1 \wedge dx^2)\otimes\{(\rho_{12}S^1+\rho_{22}S^2)\partial_{x^1}-(\rho_{11}S^1+\rho_{21}S^2)\partial_{x^2}\}\,.
\end{eqnarray*}
Consequently
\begin{equation}\label{E1}
\left(\begin{array}{cc}\rho_{11}&\rho_{21}\\
\rho_{12}&\rho_{22}\end{array}\right)\
\left(\begin{array}{cc}S^1\\S^2\end{array}\right)=\left(\begin{array}{c}0\\0\end{array}\right)\,.
\end{equation}
Thus if $\mathfrak{P}$ is non-trivial, $\operatorname{Rank}\{\rho\}\le1$.
Fix $P\in M$ and let $(x^1,x^2)$ be a system of local coordinates centered at $P$. Suppose $\dim\{\mathfrak{P}\}=2$.
Choose $S_i\in\mathfrak{P}(\mathcal{M})$ so $S_1(P)=\partial_{x^1}$ and $S_2(P)=\partial_{x^2}$.
Equation~(\ref{E1}) then implies $\rho=0$.
\end{proof}

\subsection{The associated torsion free surface}
If $\mathcal{M}$ is an affine surface without torsion and if $S\in\mathfrak{T}(M)$,
then we can perturb the Christoffel symbols of $\mathcal{M}$ to create a new affine manifold
$\mathcal{M}(S)=(M,{}^S\nabla)$ with  $S$ as the associated torsion tensor
by setting
\begin{equation}\label{E2}\begin{array}{llll}
{}^S\Gamma_{11}{}^1=\Gamma_{11}{}^1,&{}^S\Gamma_{11}{}^2=\Gamma_{11}{}^2,\\
{}^S\Gamma_{12}{}^1=\Gamma_{12}{}^1+S^1,&{}^S\Gamma_{21}{}^1=\Gamma_{12}{}^1-S^1,&
{}^S\Gamma_{22}{}^1=\Gamma_{22}{}^1,\\
{}^S\Gamma_{12}{}^2=\Gamma_{12}{}^2+S^2,&{}^S\Gamma_{21}{}^2=\Gamma_{12}{}^2-S^2,
&{}^S\Gamma_{22}{}^2=\Gamma_{22}{}^2.
\end{array}\end{equation}
Thus every abstract torsion tensor can be realized geometrically.
Conversely, if $\mathcal{M}$ is an affine manifold with torsion,
set ${}^0\nabla_XY=\nabla_XY-T(X,Y)$ and obtain
an associated surface ${}^0\!\mathcal{M}=(M,{}^0\nabla)$ such that ${}^0\!\mathcal{M}(T)=\mathcal{M}$. We then have
$$
{}^0\Gamma_{ij}{}^k=\textstyle\frac12\{\Gamma_{ij}{}^k+\Gamma_{ji}{}^k\}\,.
$$

Let $\mathcal{M}_{u,v}$ be the geometry with (possibly) non-zero Christoffel symbols
$\Gamma_{11}{}^1=1$, $\Gamma_{12}{}^1=2u$, and $\Gamma_{22}{}^1=v$ for $(u,v)\in\mathbb{R}^2$.
The associated torsion free geometry ${}^0\!\mathcal{M}_{u,v}$ has Christoffel symbols
${}^0\Gamma_{11}{}^1=1$, ${}^0\Gamma_{12}{}^1={}^0\Gamma_{21}{}^1=u$, and ${}^0\Gamma_{22}{}^1=v$.
The torsion tensor of $\mathcal{M}_{u,v}$ is given by
$T=(dx^1\wedge dx^2)\otimes(u\partial_{x^1})$.
We make a direct computation to see
$$\begin{array}{lll}
\rho_{\mathcal{M}_{u,v}}=v\,dx^2\otimes dx^2,&\nabla\rho_{\mathcal{M}_{u,v}}=0,&\nabla T=0,\\[0.05in]
\rho_{\,{}^0\!\mathcal{M}_{u,v}}=(v-u^2)\,dx^2\otimes dx^2,&{}^0\nabla(\rho_{\,{}^0\!\mathcal{M}_{u,v}})=0,&{}^0\nabla T=0.
\end{array}$$
Thus both $\mathcal{M}_{u,v}$ and ${}^0\!\mathcal{M}_{u,v}$ are symmetric affine surfaces; the torsion tensor $T$
of $\mathcal{M}_{u,v}$ is parallel both with respect to $\nabla$ and with respect to ${}^0\nabla$.

\subsection{The proof of Theorem~\ref{T2}~(1)}
Let $\mathcal{M}$ be an affine surface which is flat with parallel non-vanishing torsion.
Fix a point $P$ of $M$. Since $R=0$, we can choose a frame $\{X,Y\}$ for the tangent bundle so that $\nabla X=0$
and $\nabla Y=0$. Let $\{X^*,Y^*\}$ be the corresponding dual frame for the cotangent bundle; we then have dually that
$\nabla X^*=\nabla Y^*=0$. Expand $[X,Y]=aX+bY$. Then
$$
T=-\textstyle\frac12(X^*\wedge Y^*)\otimes[X,Y]
=-\frac12(X^*\wedge Y^*)\otimes(a(x^1,x^2)X+b(x^1,x^2)Y)\,.
$$
Since $X$, $Y$, $X^*$, and $Y^*$ are parallel, the assumption that $\nabla T=0$ implies $a$ and $b$ are constant.
Since $T\ne0$, we can make a linear change of frame to assume $a=0$ and $b=-1$ and hence $[X,Y]=-Y$.
Choose local coordinates $(s,t)$ near $P$ so $Y=\partial_{t}$. Expand
$X=u(s,t)\partial_{s}+v(s,t)\partial_{t}$.
The bracket relation $[X,Y]=-Y$ shows $\partial_tu=0$ and
$\partial_{t}v=1$. Consequently, $X=u(s)\partial_{s}+(v(s)+t)\partial_t$.
Perform a shear and set $\tilde s=s$ and $\tilde t=t+\varepsilon(s)$ where $\varepsilon$ remains to be determined. Then
\begin{eqnarray*}
&&d\tilde s=ds,\quad d\tilde t=dt+\varepsilon^\prime(s)ds,\quad \partial_{\tilde s}=\partial_{s}-\varepsilon^\prime(s)\partial_{t},
\quad\partial_{\tilde t}=\partial_{t},\\
&&X=u(\tilde s)\partial_{\tilde s}
+\{v(\tilde s)+\tilde t-\varepsilon(\tilde s)+u(\tilde s)\varepsilon^\prime(\tilde s)\}\partial_{\tilde t},\qquad\ Y=\partial_{\tilde t}\,.
\end{eqnarray*}
Solve the ODE $v(\tilde s)-\varepsilon(\tilde s)+u(\tilde s)\varepsilon^\prime(\tilde s)=0$ to express
$X=u(\tilde s)\partial_{\tilde s}+\tilde t\partial_{\tilde t}$. Set $x^2=\tilde t$ and choose $x^1=x^1(\tilde s)$ so
$x^1\partial_{x^1}=u(\tilde s)\partial_{\tilde s}$. This expresses $X=x^1\partial_{x^1}+x^2\partial_{x^2}$.
Since $\nabla\partial_{x^2}=0$, we have
$\Gamma_{12}{}^1=\Gamma_{12}{}^2=\Gamma_{22}{}^1=\Gamma_{22}{}^2=0$. We compute:
\begin{eqnarray*}
0&=&\nabla_{\partial_1}(x^1\partial_{x^1}+x^2\partial_{x^2})=(1+x^1\Gamma_{11}{}^1)\partial_{x^1}+x^1\Gamma_{11}{}^2\partial_{x^2},\\
0&=&\nabla_{\partial_2}(x^1\partial_{x^1}+x^2\partial_{x^2})
=x^1\Gamma_{21}{}^1\partial_{x^1}+(x^1\Gamma_{21}{}^2+1)\partial_{x^2}.
\end{eqnarray*}
This defines a Type~$\mathcal{B}$ structure where the only non-zero Christoffel symbols are
$\Gamma_{11}{}^1=\Gamma_{21}{}^2=-(x^1)^{-1}$. On the other hand,
the structure $\mathcal{M}_{1,0}$ has non-zero parallel torsion with vanishing Ricci tensor. Consequently this structure
is isomorphic to $\mathcal{M}_{1,0}$. This establishes Theorem~\ref{T2}~(1).~\qed

\subsection{The proof of Theorem~\ref{T2}~(2)}
Let $\mathcal{M}$ be a symmetric affine surface with parallel non-zero torsion which is not flat. By Lemma~\ref{L7} and Lemma~\ref{L8}, the Ricci tensor $\rho$ of $\mathcal{M}$ is symmetric and has rank 1. Define a
smooth 1-dimensional distribution by setting $\ker(\rho):=\{\xi:\rho(\xi,\eta)=0\ \forall\ \eta\}$.
Suppose $\xi\in\ker(\rho)$. Let $\eta$ be an arbitrary tangent vector field. Since $\nabla\rho=0$, we compute
$$
0=(\nabla\rho)(\xi,\eta)=d\rho(\xi,\eta)-\rho(\nabla\xi,\eta)-\rho(\xi,\nabla\eta)=0-\rho(\nabla\xi,\eta)-0\,.
$$
Consequently, the distribution $\ker(\rho)$ is invariant under $\nabla$. Let $0\ne\xi\in\ker(\rho)$. Choose local coordinates so $\xi=\partial_{x^1}$. We then
have $\rho=\rho_{22}dx^2\otimes dx^2$. Since $\ker(\rho)$ is invariant under $\nabla$,
we may expand
$\nabla_{\partial_{x^1}}\partial_{x^1}=\omega_1\partial_{x^1}$ and $\nabla_{\partial_{x^2}}\partial_{x^1}=\omega_2\partial_{x^1}$.
The commutator of covariant differentiation is given by curvature so
$$(\nabla_{\partial_{x^1}}\nabla_{\partial_{x^2}}-\nabla_{\partial_{x^2}}\nabla_{\partial_{x^1}})\partial_{x^1}
=R_{121}{}^1\partial_{x^1}+R_{121}{}^2\partial_{x^2}
=\rho_{21}\partial_{x^1}-\rho_{11}\partial_{x^2}=0\,.$$
We may also compute directly
\begin{eqnarray*}
&&(\nabla_{\partial_{x^1}}\nabla_{\partial_{x^2}}-\nabla_{\partial_{x^2}}\nabla_{\partial_{x^1}})\partial_{x^1}\
=\nabla_{\partial_{x^1}}\{\omega_2\partial_{x^1}\}-\nabla_{\partial_{x^2}}\{\omega_1\partial_{x^1}\}\\
&=&(\omega_1\omega_2+\partial_{x^1}\omega_2-\omega_2\omega_1-\partial_{x^2}\omega_1)\partial_{x^1}\,.
\end{eqnarray*}
This implies $\partial_{x^1}\omega_2-\partial_{x^2}\omega_1=0$. Consequently, there exists a smooth function
$f$ so that $\omega_1=\partial_{x^1}f$ and $\omega_2=\partial_{x^2}f$. Let $\tilde\xi=e^{-f}\xi$. We then have
$\nabla\tilde\xi=0$ so $\tilde\xi$ is a parallel vector field on $\mathcal{M}$. We replace $\xi$ by $\tilde\xi$
and obtain
$$
\rho=\rho_{22}dx^2\otimes dx^2,\quad \Gamma_{11}{}^1=\Gamma_{11}{}^2=\Gamma_{21}{}^1=\Gamma_{21}{}^2=0\,.
$$
Let $A_{ij}{}^k$ be the Christoffel symbols of ${}^0\!\mathcal{M}$. We adopt the notation of Equation~(\ref{E2}) and obtain
$$\begin{array}{llllll}
A_{11}{}^1=0,&A_{11}{}^2=0,&A_{12}{}^1=T^1,&A_{12}{}^2=T^2,\\
A_{22}{}^1=\Gamma_{22}{}^1,&A_{22}{}^2=\Gamma_{22}{}^2\,.
\end{array}$$
A direct computation shows
\begin{equation}\label{E3}\begin{array}{l}
\rho_{11}=0,\quad\rho_{21}=0,\quad\rho_{12}=2\partial_{x^2}A_{12}{}^2-\partial_{x^1}A_{22}{}^2,\\
\rho_{22}=-2A_{12}{}^2A_{22}{}^1+2A_{12}{}^1A_{22}{}^2-2\partial_{x^2}A_{12}{}^1+\partial_{x^1}A_{22}{}^1\,.
\end{array}\end{equation}
We express the equation $\nabla T=\left(\begin{array}{cc}T_{1;1}&T_{1;2}\\T_{2;1}&T_{2;2}\end{array}\right)$ in terms of the $A$ variables:
$$0=\nabla T=\left(\begin{array}{cc}
\partial_{x^1}(A_{12}{}^1)&A_{12}{}^2A_{22}{}^1-A_{12}{}^1A_{22}{}^2+\partial_{x^2}(A_{12}{}^1)\\
\partial_{x^1}(A_{12}{}^2)&\partial_{x^2}(A_{12}{}^2)\end{array}\right)\,.
$$
Consequently, $\partial_{x^1}(A_{12}{}^1)=0$, $\partial_{x^1}(A_{12}{}^2)=0$, and $\partial_{x^2}(A_{12}{}^2)=0$.
By Lemma~\ref{L7}, $\rho_{12}=\rho_{21}=0$. Since $\partial_{x^2}(A_{12}{}^2)=0$, Equation~(\ref{E3}) implies
$\partial_{x^1}A_{22}{}^2=0$. Thus
\begin{eqnarray*}
&&A_{12}{}^1(x^1,x^2)=a_{12}{}^1(x^2),\quad
A_{12}{}^2(x^1,x^2)=c_{12}{}^2\in\mathbb{R},\\
&&A_{22}{}^2(x^1,x^2)=a_{22}{}^2(x^2)\,.
\end{eqnarray*}
Since $\operatorname{Rank}\{\rho\}=1$, $0\ne\rho_{22}$. We compute
$\rho_{22}+2T_{1;2}=\partial_{x^1}(A_{22}{}^1)$. Since $T_{1;2}=0$, we may conclude $\partial_{x^1}(A_{22}{}^1)\ne0$. We have
$$
0=T_{1;2}=c_{12}{}^2A_{22}{}^1(x^1,x^2)-a_{12}{}^1(x^2)a_{22}{}^2(x^2)+\partial_{x^2}(a_{12}{}^1(x^2))\,.
$$
Since $A_{22}{}^1$ exhibits non-trivial dependence on $x^1$, we have that $c_{12}{}^2=0$. Thus
$$\begin{array}{lll}
A_{12}{}^1(x^1,x^2)=a_{12}{}^1(x^2),&A_{12}{}^2(x^1,x^2)=0,&
A_{22}{}^2(x^1,x^2)=a_{22}{}^2(x^2)\,.
\end{array}$$
We may then compute $0=T_{1;2}=-a_{12}{}^1a_{22}{}^2+(a_{12}{}^1)^\prime$. Let $u=a_{12}{}^1(0)$ and let
$a(x^2)$ be a smooth function so $a(0)=0$ and $a^\prime(x^2)=a_{22}{}^2(x^2)$. We can then solve the ODE
$0=-a_{12}{}^1a_{22}{}^2+(a_{12}{}^1)^\prime$ to see:
$$\begin{array}{lll}
A_{12}{}^1(x^1,x^2)=ue^{a(x^2)},&A_{12}{}^2(x^1,x^2)=0,&A_{22}{}^2(x^1,x^2)=a^\prime(x^2)\,.
\end{array}$$
There are only two non-trivial equations remaining to ensure $\nabla\rho=0$:
\begin{eqnarray*}
0&=&(\partial_{x^1})^2A_{22}{}^1(x^1,x^2),\\
0&=&-2a^\prime(x^2)\partial_{x^1}A_{22}{}^1(x^1,x^2)+(\partial_{x^1}\partial_{x^2})A_{22}{}^1(x^1,x^2)\,.
\end{eqnarray*}
This implies $A_{22}{}^1(x^1,x^2)=b(x^2)+x^1 ve^{2a(x^2)}$ for some constant $v\in\mathbb{R}$. We then compute
$\rho=ve^{2a(x^2)}dx^2\otimes dx^2$. Since $\mathcal{M}$ is not flat, $v\neq 0$. We can renormalize $x^2$
so $\rho_{22}=vdx^2\otimes dx^2$ for $v\neq0$. The non-zero Christoffel symbols are then (renaming $e^{-2a(x^2)}b(x^2)\to b(x^2)$)
$$
\Gamma_{12}{}^1=2u\text{ and }\Gamma_{22}{}^1=b(x^2)+vx^1\,.
$$
We perform a shear and set $y^1=x^1+\alpha(x^2)$ and $y^2=x^2$. We then have $\partial_{y^1}=\partial_{x^1}$
and $\partial_{y^2}=\partial_{x^2}-\alpha^\prime(x^2)\partial_{x^1}$. Consequently
\begin{eqnarray*}
&&\nabla_{\partial_{y^1}}\partial_{y^1}=\nabla_{\partial_{x^1}}\partial_{x^1}=0,\\
&&\nabla_{\partial_{y^2}}\partial_{y^1}=\nabla_{\partial_{x^2}}\partial_{x^1}-\alpha^\prime\nabla_{\partial_{x^1}}\partial_{x^1}=0,\\
&&\nabla_{\partial_{y^1}}\partial_{y^2}=\nabla_{\partial_{x^1}}\partial_{x^2}-\alpha^\prime\nabla_{\partial_{x^1}}\partial_{x^1}
=2u\partial_{x^1}=2u\partial_{y^1},\\
&&\nabla_{\partial_{y^2}}\partial_{y^2}
=\nabla_{(\partial_{x^2}-\alpha^\prime\partial_{x^1})}(\partial_{x^2}-\alpha^\prime\partial_{x^1})\\
&&\quad=(b(x^2)+vx^1)\partial_{x^1}-2u\alpha^\prime\partial_{x^1}-\alpha^{\prime\prime}\partial_{x^1}\,.
\end{eqnarray*}
Choose $\kappa$ so that $(x^1+\kappa)>0$ in a neighborhood of the point in question.
We solve the ODE $b(x^2)-2u\alpha^\prime(x^2)-\alpha^{\prime\prime}(x^2)=v\kappa$ to ensure the only non-zero Christoffel symbols
are $\Gamma_{12}{}^1=2u$ and $\Gamma_{22}{}^1=v(x^1+\kappa)$.
We make the change of variables
$\partial_{z^1}=(x^1+\kappa)\partial_{y^1}$ and $\partial_{z^2}=\partial_{y^2}$. We compute
\begin{eqnarray*}
&&\nabla_{\partial_{z^1}}\partial_{z^1}=(x^1+\kappa)\nabla_{\partial_{y^1}}((x^1+\kappa)\partial_{y^1})=
(x^1+\kappa)\partial_{y^1}=\partial_{z^1},\\
&&\nabla_{\partial_{z^2}}\partial_{z^1}=\nabla_{\partial_{y^2}}((x^1+\kappa)\partial_{y^1})=0,\\
&&\nabla_{\partial_{z^1}}\partial_{z^2}=(x^1+\kappa)\nabla_{\partial_{y^1}}\partial_{y^2}=2u(x^1+\kappa)\partial_{y^1}=2u\partial_{z^1},\\
&&\nabla_{\partial_{z^2}}\partial_{z^2}=\nabla_{\partial_{y^2}}\partial_{y^2}=v(x^1+\kappa)\partial_{y^1}=v\partial_{z^1}\,.
\end{eqnarray*}

The non-zero Christoffel symbols now take the form
$$
\Gamma_{11}{}^1=1,\quad\Gamma_{12}{}^1=2u,\quad\Gamma_{22}{}^1=v\,.
$$
We can rescale $x^2$ to assume $v=\pm1$. We must have $u\ne0$ to ensure the torsion is non-zero.
Replacing $x^2$ by $-x^2$ replaces $u$ by $-u$. We may therefore assume $u>0$ and obtain the structures which are given in Theorem~\ref{T2}~(2).~\qed
\subsection{Distinguishing the structures} The structures $\mathcal{M}_{u,v}$ are all Type~$\mathcal{A}$ structures;
they are invariant under the translation group and are thus homogeneous geometries. The signature of the Ricci tensor
determines the parameter $v$. We suppose $v=\pm1$ as there is only one model in Assertion~(1).
Let ${}^0\!\mathcal{M}_{u,v}$ be the associated torsion free geometry;
${}^0\Gamma_{11}{}^1=1$, ${}^0\Gamma_{12}{}^1={}^0\Gamma_{21}{}^1=u$, and ${}^0\Gamma_{22}{}^1=v$.
We have
$\rho_{\,{}^0\!\mathcal{M}_{u,v}}=v(v-u^2)\rho_{\mathcal{M}_{u,v}}$. Since $v$ is determined by the signature of
$\rho_{\mathcal{M}_{u,v}}$, $u^2$ is an invariant of the affine structure in this context. Since $u>0$, $u$ is determined
and the structures are distinct affine structures.~\qed

\section{The proof of Theorem~\ref{T4}}\label{S3}

Let $\mathcal{M}$ be a Type~$\mathcal{A}$ symmetric surface with non-parallel torsion tensor.
By making a suitable change of basis, we may assume $T=(dx^1\wedge dx^2)\otimes\partial_{x^2}$.
This normalizes the linear changes of coordinates up to the action of the $ax+b$ subgroup of $\operatorname{GL}(2,\mathbb{R})$.
Let $A_{ij}{}^k:=\frac12(\Gamma_{ij}{}^k+\Gamma_{ji}{}^k)$
be the Christoffel symbols of  ${}^0\!\mathcal M$.
The following is a useful result which follows by a direct computation.

\begin{lemma}\label{L9}
Let $(y^1,y^2)=(x^1,a^{-1}(x^2-bx^1))$ be a change of variables which defines a {shear}\index{shear}. Then
\par $dy^1=dx^1$, \quad $dy^2=a^{-1}(dx^2-bdx^1)$,\quad
$\partial_{y^1}=\partial_{x^1}+b \partial_{x^2}$,\quad$\partial_{y^2}=a\partial_{x^2}$,
\par ${}^y\!A_{11}{}^1={}^x\!A_{11}{}^1+2b\,{}^x\!A_{12}{}^1+b^2\,{}^x\!A_{22}{}^1$,
\par ${}^y\!A_{11}{}^2=\frac1a\{{}^x\!A_{11}{}^2+b (2\  {}^x\!A_{12}{}^2-{}^x\!A_{11}{}^1)
+b^2 ({}^x\!A_{22}{}^2-2\ {}^x\!A_{12}{}^1)-b^3\,{}^x\!A_{22}{}^1\}$,
\par $ {}^y\!A_{12}{}^1=a ({}^x\!A_{12}{}^1+b\,{}^x\!A_{22}{}^1)$,
\par ${}^y\!A_{12}{}^2={}^x\!A_{12}{}^2+b\,{}^x\!A_{22}{}^2-b({}^x\!A_{12}{}^1+b\ {}^x\!A_{22}{}^1)$,
\par ${}^y\!A_{22}{}^1=a^2\ {}^x\!A_{22}{}^1$,\par $
{}^y\!A_{22}{}^2=a( {}^x\!A_{22}{}^2 - b\,{}^x\!A_{22}{}^1 )
$.
\end{lemma}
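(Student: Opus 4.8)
The plan is to obtain these six identities by a direct computation of how the Christoffel symbols of the torsion-free connection ${}^0\nabla$ change under the given linear change of coordinates. The key simplifying observation is that, because $(y^1,y^2)=(x^1,a^{-1}(x^2-bx^1))$ is \emph{linear}, all second derivatives of the coordinate transformation vanish, so the inhomogeneous term in the transformation rule for a connection drops out and the symbols ${}^x\!A_{ij}{}^k$ transform tensorially; moreover, since ${}^0\nabla$ is torsion free we may use ${}^x\!A_{12}{}^k={}^x\!A_{21}{}^k$ throughout.

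First I would record the elementary coordinate data. Differentiating the change of variables gives $dy^1=dx^1$ and $dy^2=a^{-1}(dx^2-b\,dx^1)$, and inverting it ($x^1=y^1$, $x^2=ay^2+by^1$) gives $\partial_{y^1}=\partial_{x^1}+b\,\partial_{x^2}$ and $\partial_{y^2}=a\,\partial_{x^2}$, which are the first two displayed assertions. I would also record the dual relations $\partial_{x^1}=\partial_{y^1}-a^{-1}b\,\partial_{y^2}$ and $\partial_{x^2}=a^{-1}\partial_{y^2}$, which are needed to push the final answer back into the $y$-frame.

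Then, for each of the pairs $(i,j)\in\{(1,1),(1,2),(2,2)\}$, I would expand ${}^0\nabla_{\partial_{y^i}}\partial_{y^j}$ using $\mathbb{R}$-bilinearity in the constants $a,b$, substitute ${}^0\nabla_{\partial_{x^p}}\partial_{x^q}={}^x\!A_{pq}{}^c\,\partial_{x^c}$ together with the symmetry of the $A$'s, and finally rewrite the resulting vector field in the basis $\{\partial_{y^1},\partial_{y^2}\}$ via the dual relations above; the coefficient of $\partial_{y^k}$ is then ${}^y\!A_{ij}{}^k$. For instance ${}^0\nabla_{\partial_{y^1}}\partial_{y^1}$ equals $({}^x\!A_{11}{}^c+2b\,{}^x\!A_{12}{}^c+b^2\,{}^x\!A_{22}{}^c)\partial_{x^c}$, and substituting each $\partial_{x^c}$ in terms of the $\partial_{y^k}$ yields precisely the stated ${}^y\!A_{11}{}^1$ and ${}^y\!A_{11}{}^2$; the computations for $(1,2)$ and $(2,2)$ are entirely analogous and give the remaining four formulas.

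There is no genuine obstacle here; the only points demanding care are keeping the two substitutions straight — expanding $\partial_{y^i}$ in the $x$-frame uses the Jacobian $\partial x/\partial y$, while re-expressing the answer in the $y$-frame uses $\partial y/\partial x$ — and making sure the powers of $a$ and $b$ are collected correctly.
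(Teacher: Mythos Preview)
Your approach is correct and is exactly the direct computation the paper has in mind; the paper offers no proof beyond the remark that the lemma ``follows by a direct computation.'' Your observation that the linearity of the coordinate change kills the inhomogeneous term, together with the symmetry ${}^x\!A_{12}{}^k={}^x\!A_{21}{}^k$, is precisely what makes the calculation routine.
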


By Lemma~\ref{L9}, if $\Gamma_{22}{}^1\ne0$, we can always fix the gauge so
$\Gamma_{22}{}^1=\pm1$ and $\Gamma_{22}{}^2=0$. If $\Gamma_{22}{}^1=0$, we
can rescale $x^2$ to assume $\Gamma_{22}{}^2\in\{0,1\}$ but the gauge is not yet fixed.  This gives rise to three cases.
We will use a similar gauge normalization in the Type~$\mathcal{B}$ setting.
\smallbreak\noindent{\bf Case 1:} $\Gamma_{22}{}^1=0$ and $\Gamma_{22}{}^2\ne0$.
Rescale $x^2$ to assume $\Gamma_{22}{}^2=1$ and set $a=1$ in Lemma~\ref{L9}.
We compute that $0=\rho_{22;2}=2(A_{12}{}^1-1)A_{12}{}^1$.
There are two subcases:
\smallbreak\noindent{Case 1.1: $A_{12}{}^1=0$.} The only remaining non-zero component of $\nabla\rho$
is given by $\rho_{11;1}=-2A_{11}{}^1[1+A_{11}{}^2+A_{11}{}^1(-1+A_{12}{}^2)-(A_{12}{}^2)^2]$. There are two subpossibilities:
\smallbreak\noindent{Case 1.1.1: $A_{11}{}^1=0$.} We have $\nabla T=0$ so we reject this case.
\smallbreak\noindent{Case 1.1.2: $A_{11}{}^1\neq 0$ and
$1+A_{11}{}^2+A_{11}{}^1(-1+A_{12}{}^2)-(A_{12}{}^2)^2=0$.}
This fixes $A_{11}{}^2$. Choose $b$ in Lemma~\ref{L9} to ensure $A_{12}{}^2=0$; this gives Assertion~(1).
\smallbreak\noindent{Case 1.2: $A_{12}{}^1=1$.} We have $\rho_{12;2}=2(1-A_{12}{}^2)$.
We set $A_{12}{}^2=1$ and obtain $\nabla\rho=0$. Choose $b$ in Lemma~\ref{L9} to ensure $A_{11}{}^1=0$; this gives Assertion~(2).
\smallbreak\noindent{\bf Case 2:} $\Gamma_{22}{}^1=0$ and $\Gamma_{22}{}^2=0$. Since
$\rho_{22;1}=4(A_{12}{}^1)^2$, we have  $A_{12}{}^1=0$.
$\rho_{11;1}=-2A_{11}{}^1(A_{11}{}^1-A_{12}{}^2-1)(A_{12}{}^2-1)$
is the only non-zero component of $\nabla R$. Furthermore, the only non-zero component of $\nabla T$ is
$T_{2;1}=-A_{11}{}^1$ so $A_{11}{}^1\ne0$.
\smallbreak\noindent{Case 2.1: $A_{12}{}^2=A_{11}{}^1-1$.}
If $A_{11}{}^1\neq2$, we set $a=1$ and choose $b$ in Lemma~\ref{L9} so $A_{11}{}^2=0$. Rescaling $x^2$ then
plays no role. This normalizes the gauge and we obtain Assertion~(3). If on the other hand $A_{11}{}^1=2$
and $A_{11}{}^2\ne0$, then we can rescale $x^2$ to obtain Assertion~(4) and again we have fixed the gauge as the parameter $b$
plays no role. Finally, if $A_{11}{}^1=2$ and $A_{11}{}^2=0$, we again obtain Assertion~(3).
\smallbreak\noindent{Case 2.2: $A_{12}{}^2\neq A_{11}{}^1-1$ and $A_{12}{}^2=1$}. Thus $A_{11}{}^1\neq2$ and we can choose the parameter $b$ in Lemma~\ref{L9} so that $A_{11}{}^2=0$. We obtain Assertion~(5).

\smallbreak\noindent{\bf Case 3:} $A_{22}{}^1\ne0$. We use Lemma~\ref{L9} to make a gauge transformation and fix the
gauge so $A_{22}{}^1=\varepsilon=\pm1$ and $A_{12}{}^1=0$. We set $A_{11}{}^1=\omega$ and $A_{22}{}^2=\eta$ and
compute $0=\rho_{12;2}-\rho_{22;1}=-4\varepsilon(1+A_{12}{}^2-\omega)$.
We set $A_{12}{}^2=\omega-1$ and compute $0=\rho_{22;2}=2A_{11}{}^2$. We then have $\nabla\rho=0$ and obtain Assertion~(6).~\qed

\section{The proof of Theorem~\ref {T5}}\label{S4}
The essential technical point in performing the analysis is to fix the gauge; otherwise the problem
is combinatorially intractable. The torsion tensor plays an essential role in this regard.
For Type $\mathcal A$ surfaces we used the action of $\operatorname{GL}(2,\mathbb{R})$ to set $T=(dx^1\wedge dx^2)\otimes\partial_{x^2}$.
The remaining gauge freedom is then governed by the $ax+b$ group sending
$(x^1,x^2)\rightarrow(x^2,ax^2+bx^1)$. The natural gauge group in the Type~$\mathcal{B}$
setting is again the $ax+b$ group with the same action on the coordinates. We denote $\tilde A_{ij}{}^k$ the
Christoffel symbols of ${}^0\!\mathcal M$ evaluated at $x_1=1$; $\tilde A_{ij}{}^k=x^1(\Gamma_{ij}{}^k+\Gamma_{ji}{}^k)/2$.

Let $\mathcal{M}$ be a symmetric affine surface of Type~$\mathcal{B}$. The Ricci tensor
is symmetric. This yields the relation $\tilde A_{12}{}^1=\tilde T^1-\tilde A_{22}{}^2$.
This is analogous to using the general linear group in the Type~$\mathcal{A}$ setting
to fix the gauge. The $ax+b$ group now acts and we have the same 3 cases as in Lemma~\ref{L9} in
the Type~$\mathcal{A}$ setting. Note that $\tilde T^2$ is still a free parameter.
\smallbreak\noindent{\bf Case 1:} $\tilde A_{22}{}^1=0$ and $\tilde A_{22}{}^2\ne0$.
We rescale $x^2$ to assume $\tilde A_{22}{}^2=1$. We then have $\tilde\rho_{22;2}=-4(2\tilde T^1-1)$ so $\tilde T^1=\frac12$ and
 $\tilde A_{12}{}^1=\tilde T^1-\tilde A_{22}{}^2=-\frac12$. Since
$\tilde\rho_{12;1}=-2-\tilde A_{11}{}^1-\tilde A_{12}{}^2-\tilde T^2$, we obtain $\tilde A_{11}{}^1=-2-\tilde A_{12}{}^2-\tilde T^2$. We finally compute
$\tilde\rho_{11;2}=4(\tilde A_{11}{}^2-\tilde A_{12}{}^2-(\tilde A_{12}{}^2)^2+\tilde T^2+(\tilde T^2)^2)$, which leads to
$$
\tilde A_{11}{}^2=\tilde A_{12}{}^2+(\tilde A_{12}{}^2)^2-\tilde T^2-(\tilde T^2)^2\,.
$$
We now have $\nabla\rho=0$. 
Since $\tilde A_{22}{}^2\neq 0$, we can make a shear to set $\tilde A_{12}{}^2+\tilde T^2=0$. We thus obtain Assertion~(1).

\smallbreak\noindent{\bf Case 2:}  $\tilde A_{22}{}^1=0$ and $\tilde A_{22}{}^2=0$.
We have $\tilde\nabla\rho_{22;1}=-8(\tilde T^1)^2(\tilde A_{12}{}^2-\tilde T^2)$. This gives rise to 2 cases.
\smallbreak\noindent{Case 2.1: $\tilde A_{12}{}^2\neq \tilde T^2$}. Thus $\tilde T^1=0$ so
$\tilde A_{12}{}^1=0$.
The only remaining non-zero component of $\nabla\rho$ is given by
$\tilde\rho_{11;1}=2(1+\tilde A_{11}{}^1)(\tilde A_{12}{}^2-\tilde T^2)(-1-\tilde A_{11}{}^1+\tilde A_{12}{}^2+\tilde T^2)$.
If $\tilde A_{11}{}^1=-1$ we have $\nabla T=0$. We take $\tilde A_{12}{}^2=1+\tilde A_{11}{}^1-\tilde T^2$. This ensures $\nabla\rho=0$.
We now fix the gauge.
\smallbreak\noindent{Case 2.1.1: $\tilde A_{11}{}^2=0$.} We obtain Assertion~(2).
\smallbreak\noindent{Case 2.1.2: $\tilde A_{11}{}^1\ne2\tilde T^2-2$.} We have
$\tilde A_{22}{}^2=\tilde A_{12}{}^1=\tilde A_{22}{}^1=0$. Furthermore $2\tilde A_{12}{}^2-\tilde A_{11}{}^1=2+\tilde A_{11}{}^1-2\tilde T^2\ne0$. Thus we can use
Lemma~\ref{L9} to make a gauge transform to ensure $\tilde A_{11}{}^2=0$ which reduces to Case 2.1.1.
\smallbreak\noindent{Case 2.1.3: $\tilde A_{11}{}^2\ne0$ and $\tilde A_{11}{}^1=2\tilde T^2-2$.}
Rescale $x^2$ to ensure $\tilde A_{11}{}^2=1$. The shear parameter $b$ in Lemma~\ref{L9} plays no role. We obtain Assertion~(3).

\smallbreak\noindent{Case 2.2: $\tilde A_{12}{}^2=\tilde T^2$}.
We then have $\nabla\rho=0$. We fix the gauge.
\smallbreak\noindent{Case 2.2.1: $\tilde T^1\ne0$.} We have $\tilde A_{12}{}^1=\tilde T^1-\tilde A_{22}{}^2=\tilde T^1\ne0$.
Since $\tilde A_{22}{}^1=0$, we can choose $b$ in Lemma~\ref{L9} to assume $\tilde A_{11}{}^1=0$. We can then rescale $x^2$
to assume $A_{12}{}^1=\frac12$ and obtain Assertion~(4).
\smallbreak\noindent{Case 2.2.2: $\tilde T^1=0$ and $\tilde A_{11}{}^2=0.$} We obtain
Assertion~(5); the remaining gauge freedom plays no role.
\smallbreak\noindent{Case 2.2.3: $\tilde T^1=0$ and $\tilde A_{11}{}^1\neq 2\tilde T^2$.}
We have $\tilde A_{22}{}^1=0$, $\tilde A_{12}{}^1=0$, $\tilde A_{22}{}^2=0$, and $2\tilde A_{12}{}^2-\tilde A_{11}{}^1\ne0$. We can therefore
apply Lemma~\ref{L9} to choose $b$ so $\tilde A_{11}{}^2=0$ and obtain Case 2.2.2.
\smallbreak\noindent{Case 2.2.4: $\tilde T^1=0$, $\tilde A_{11}{}^1=2\tilde T^2$ and
$\tilde A_{11}{}^2\ne0$.} We rescale $x^2$ to obtain Assertion~(6).

\smallbreak\noindent{\bf Case 3:} $\tilde A_{22}{}^1\ne0$.
We may rescale $x^2$ and then use Lemma~\ref{L9} to assume $\tilde A_{22}{}^1=\varepsilon$ and $\tilde A_{22}{}^2=0$ for
$\varepsilon=\pm1$.
We have
$0=\tilde\rho_{12;2}=2\varepsilon(\tilde A_{12}{}^2-\tilde T^2)(\tilde T^2+\tilde A_{12}{}^2-\tilde A_{11}{}^1)$.
\smallbreak\noindent{Case 3.1: $\tilde A_{12}{}^2=\tilde T^2$.} We compute $\tilde\rho_{22;2}=2\tilde A_{11}{}^2$.
We set $\tilde A_{11}{}^2=0$; the only remaining equation is
$\tilde\rho_{22;1}=-2\varepsilon(-1+\tilde A_{11}{}^1-2\tilde T^2)(1+2\tilde T^2)$.
\smallbreak\noindent{Case 3.1.1: $\tilde T^2=-\frac12$.} We obtain Assertion~(7)
\smallbreak\noindent{Case 3.1.2: $\tilde T^2\neq-\frac12$ and $\tilde A_{11}{}^1=1+2\tilde T^2$.} We obtain Assertion~(8).

\smallbreak\noindent{Case 3.2: $\tilde A_{12}{}^2\neq\tilde T^2$ and
$\tilde A_{12}{}^2=\tilde A_{11}{}^1-\tilde T^2$.} We obtain
$$
\tilde\rho_{22;2}=2\varepsilon(-2\tilde A_{11}{}^1\tilde T^1+4\tilde T^1\tilde T^2+\varepsilon\tilde A_{11}{}^2)\,.
$$
This determines $\tilde A_{11}{}^2$.
We have $\tilde\rho_{22;1}=2\varepsilon(1+\tilde A_{11}{}^1)$ and hence $\tilde A_{11}{}^1=-1$.
To ensure that $\tilde A_{12}{}^2\ne \tilde T^2$, we require that $\tilde T^2\ne-\frac12$. We obtain Assertion~(9).~\qed


\subsection*{Acknowledgments}
Research of DD was partially supported by Universidad Nacional de La Plata under grant 874/18 and project 11/X791. Research of PBG was partially supported by Project MTM2016-75897-P (AEI/FEDER, Spain). Research of PP was partially supported by a Fulbright-CONICET scholarship and by Universidad Nacional de La Plata under project 11/X615. DD and PP thank the warm hospitality at the Mathematics Department of the University of Oregon, where part of this work was carried out.


\bibliography{DGP-arXiv}{}
\bibliographystyle{plain}


\end{document}